\newtheorem{theorem}{Theorem}[section]
\newtheorem{lemma}[theorem]{Lemma}
\newtheorem{proposition}[theorem]{Proposition}
\newtheorem{definition-proposition}[theorem]{Definition-Proposition}
\theoremstyle{definition}
\newtheorem{definition}[theorem]{Definition}
\newtheorem{assumption}[theorem]{Assumption}
\newtheorem{remark}[theorem]{Remark}
\newtheorem{example}[theorem]{Example}
\newtheorem{observation}[theorem]{Observation}
\newcommand{\CC}{\mathscr{C}}
\newcommand{\DDD}{\mathsf{D}}
\newcommand{\KKK}{\mathsf{K}}
\renewcommand{\O}{{\mathcal O}}
\renewcommand{\L}{\mathbb{L}}
\renewcommand{\P}{\mathbb{P}}
\newcommand{\Z}{\mathbb{Z}}
\newcommand{\X}{\mathbb{X}}
\renewcommand{\L}{\mathbb{L}}
\newcommand{\bo}{\operatorname{b}\nolimits}
\newcommand{\Ext}{\operatorname{Ext}\nolimits}
\newcommand{\Hom}{\operatorname{Hom}\nolimits}
\newcommand{\End}{\operatorname{End}\nolimits}
\newcommand{\gldim}{\operatorname{gl}.\operatorname{dim}\nolimits}
\newcommand{\RHom}{\mathbf{R}\strut\kern-.2em\operatorname{Hom}\nolimits}
\newcommand{\gen}[1]{\langle #1 \rangle}
\renewcommand{\c}{\vec c}
\newcommand{\y}{\vec y}
\newcommand{\x}{\vec x}
\DeclareMathOperator{\moduleCategory}{\mathsf{mod}} \renewcommand{\mod}{\moduleCategory}
\DeclareMathOperator{\Mod}{\mathsf{Mod}}
\DeclareMathOperator{\proj}{\mathsf{proj}}
\DeclareMathOperator{\thick}{\mathsf{thick}}
\DeclareMathOperator{\coh}{\mathsf{coh}}
\DeclareMathOperator{\qgr}{\mathsf{qgr}}
\DeclareMathOperator{\add}{\mathsf{add}}
\newcommand{\cut}{\ar@{-}@[|(5)]}
\begin{document}
\title{Tilting bundles on orders on $\P^d$}

\author[Iyama]{Osamu Iyama}
\address{O. Iyama: Graduate School of Mathematics, Nagoya University, Chikusa-ku, Nagoya, 464-8602 Japan}
\email{iyama@math.nagoya-u.ac.jp}
\urladdr{http://www.math.nagoya-u.ac.jp/~iyama/}
\thanks{The first author was partially supported by JSPS Grant-in-Aid for Scientific Research 24340004, 23540045, 20244001 and 22224001.}

\author[Lerner]{Boris Lerner}
\address{B. Lerner: Graduate School of Mathematics, Nagoya University, Chikusa-ku, Nagoya, 464-8602 Japan}
\email{blerner@gmail.com}
\urladdr{http://www.math.nagoya-u.ac.jp/~lerner/}
\thanks{The second author was supported by JSPS postdoctoral fellowship program.}


\begin{abstract}
We introduce a class of orders on $\P^d$ called Geigle-Lenzing orders and show that they have tilting bundles.
Moreover we show that their module categories are equivalent to the categories of coherent sheaves on Geigle-Lenzing projective spaces introduced in \cite{HIMO}.
\end{abstract}

\maketitle

\section{Introduction}

Throughout we work over a field $k$.
Moreover, for an order $\Lambda$ we
denote by $\mod \Lambda$ the category of coherent left $\Lambda$-modules.

Weighted projective lines were first introduced by Geigle and Lenzing \cite{GL}
and play an important role in representation theory (e.g. \cite{Me, CK, KLM})
and homological mirror symmetry (e.g. \cite{KST, U}).
It has been pointed out in both \cite{CI} and \cite{RVdB} that the category of coherent sheaves on a weighted
projective line is equivalent to the module category of a hereditary order on $\P^1$, where
by an \emph{order} we mean a certain coherent sheaf of non commutative algebras.
However, until now, this has remained only an observation and has not been
capitalised upon. 
In this paper, we aim to show that the language of orders gives
a quite effective tool to study weighted projective lines and their
generalizations.

Recently in \cite{HIMO}, \emph{Geigle-Lenzing} (\emph{GL}) \emph{projective spaces}
were introduced as a higher dimensional generalization of Geigle-Lenzing
weighted projective lines, and their representation theory was studied.
In this paper we will introduce a certain class of orders on $\P^d$
which we call \emph{Geigle-Lenzing} (\emph{GL}) \emph{orders} on $\P^d$
and prove that they actually give the category of coherent sheaves on GL projective spaces:
\begin{theorem}[Theorem \ref{graded vs order}]
	Let $\X$ be a GL projective space and $\Lambda$ be a GL order of the same type. 
	There exists an equivalence 
	\[\coh \X\simeq \mod \Lambda.\]
\end{theorem}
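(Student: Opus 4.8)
The plan is to realize both sides as Serre quotients of graded-module categories and to match the two quotients. By the definition of \cite{HIMO}, the GL projective space is $\X=\qgr^{\L}S$, the quotient of the category of finitely generated $\L$-graded modules over the commutative ring
\[
S=k[X_0,\dots,X_d,Y_1,\dots,Y_n]/(Y_i^{p_i}-\ell_i)
\]
by its subcategory of finite-length modules, where $\L$ is the rank-one abelian group generated by $\x_1,\dots,\x_n,\c$ subject to $p_i\x_i=\c$, with $\deg X_j=\c$ and $\deg Y_i=\x_i$. First I would record two structural facts: $S$ is a finite free module over $R=k[X_0,\dots,X_d]$ (with basis the monomials $Y_1^{j_1}\cdots Y_n^{j_n}$, $0\le j_i<p_i$), so that $\Proj R=\P^d$ is the natural base carrying the $\c$-grading; and the cyclic subgroup $\gen{\c}\cong\Z$ has finite quotient $G=\L/\gen{\c}\cong\prod_i\Z/p_i\Z$ of order $p_1\cdots p_n$.

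The heart of the argument is a Cohen--Montgomery--type reduction from the full $\L$-grading to the $\Z$-grading carried by $\c$. Fixing coset representatives $\{\vec{l}_g\}_{g\in G}$ of $G$ in $\L$, I would form the $\Z$-graded (by $\c$-degree) endomorphism algebra
\[
\Gamma=\End_S^{\L}\Big(\bigoplus_{g\in G}S(\vec{l}_g)\Big),
\]
which is a finite $R$-algebra, and establish the smash-product equivalence $\gr^{\L}S\simeq\gr^{\Z}\Gamma$. The content here is the standard dictionary between $\L$-graded modules over a ring graded by the finite quotient $G$ and $\Z$-graded modules over the associated smash product: an $\L$-graded $S$-module, decomposed along the cosets of $\gen{\c}$, is exactly the datum of a $\Z$-graded $\Gamma$-module. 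I would then check that this equivalence carries finite-length modules to finite-length modules by tracking supports, so that it descends to the Serre quotients.

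Next I would pass to the geometric quotient on the order side. Sheafifying the finite $R$-algebra $\Gamma$ over $\P^d=\Proj R$ produces a sheaf of $\O_{\P^d}$-orders $\Lambda$, and the relative (non-commutative) Serre correspondence gives $\qgr^{\Z}\Gamma\simeq\mod\Lambda$, the hypotheses (Noetherianity, coherence over $R$) being immediate from finiteness over $R$. Combining this with the previous step yields the desired chain
\[
\coh\X=\qgr^{\L}S\simeq\qgr^{\Z}\Gamma\simeq\mod\Lambda.
\]
A purely tilting-theoretic approach, matching a tilting bundle on $\coh\X$ with one on $\mod\Lambda$ through a common endomorphism algebra, would only produce a derived equivalence, which is why I favour this graded-module route for the abelian statement.

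The main obstacle I anticipate is the final identification: verifying that the order $\Lambda$ produced by this endomorphism/smash-product recipe is \emph{literally} the GL order of the same type, i.e.\ that its ramification along each hyperplane $\ell_i$ is governed by the weight $p_i$ exactly as in the definition of a GL order. This requires computing $\Lambda$ locally at the generic point of $\ell_i$ and comparing it with the standard hereditary (tiled) order of ramification index $p_i$, while checking that away from $\bigcup_i\ell_i$ the order is Azumaya — indeed a matrix algebra over $\O_{\P^d}$ — so that the combinatorial type $(p_1,\dots,p_n;\ell_1,\dots,\ell_n)$ agrees on both sides. A secondary, more routine point is confirming the compatibility of the two finite-length Serre subcategories under $\gr^{\L}S\simeq\gr^{\Z}\Gamma$, which I would settle by the support computation mentioned above.
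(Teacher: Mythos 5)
Your reduction from the $\L$-grading to the $\Z\c$-grading is exactly the paper's: in the paper's notation (where $R$ denotes your $\L$-graded ring $S$, and $S=k[{\bf T}]=\bigoplus_{\ell}R_{\ell\c}$ denotes your $R$), your endomorphism algebra $\Gamma=\End^{\L}_{R}\bigl(\bigoplus_{g}R(\vec{l}_g)\bigr)$ is precisely the matrix ring $R^{[\Z\c]}$ of Theorem \ref{graded Morita} --- the Remark following that theorem records the identification $\End_A^G\bigl(\bigoplus_{i\in I}A(-i)\bigr)=A^{[H]}$ --- and the descent to the Serre quotients is immediate because the equivalence there is the identity on underlying abelian groups. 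Where you diverge is the last leg. The paper applies the Artin--Zhang theorem (Theorem \ref{Artin-Zhang}) to $\mathcal{C}=\mod\Lambda$ with $s=-\otimes_\Lambda L$, $L=\Lambda\otimes_\O\O(1)$, to get $\mod\Lambda\simeq\qgr B(\Lambda,L)$, and then identifies the section ring $B(\Lambda,L)$ with $R^{[\Z\c]}$ by a global matrix computation (Propositions \ref{prop:standardfrorm} and \ref{B and R^Zc}). You instead sheafify $\Gamma$ over $\P^d$ to obtain some order $\Lambda'$ with $\qgr^{\Z}\Gamma\simeq\mod\Lambda'$, and must then show $\Lambda'$ is (Morita equivalent to) the GL order $\Lambda$ --- your self-identified ``main obstacle.''

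Your proposed resolution of that obstacle is the genuine gap. Checking that $\Lambda'$ is Azumaya off $\bigcup_iL_i$ and hereditary of ramification index $p_i$ at the generic point of each $L_i$ only controls $\Lambda'$ in codimension one, and for $d\ge2$ codimension-one data do not determine an order up to Morita equivalence: two orders in the same matrix algebra can agree outside a codimension-two locus and still have inequivalent module categories. For instance, on $\P^2$ the orders ${\rm M}_2(\O)$ and $\End_\O(\O\oplus\II_p)=\begin{bmatrix}\O&\O\\ \II_p&\O\end{bmatrix}$, with $\II_p$ the ideal sheaf of a point $p$, coincide away from $p$, so both are Azumaya in codimension one with empty ramification divisor; yet their module categories differ: the cokernel $S$ of the inclusion of column summands $\begin{bmatrix}\O\\ \II_p\end{bmatrix}\subset\begin{bmatrix}\O\\ \O\end{bmatrix}$ is a simple object with $\Ext^2(S,S)=0$, whereas every simple object of $\coh\P^2\simeq\mod{\rm M}_2(\O)$ is a skyscraper $k(x)$ with $\Ext^2(k(x),k(x))\neq0$. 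Your argument is fine for $d=1$, where hereditary orders over a smooth curve are Morita-determined by ramification, but for $d\ge2$ the GL order is not hereditary (it has global dimension $d$ by Theorem \ref{tilting}(c)) and no such classification theorem is available to quote. The fix is to drop the local analysis and argue globally, which is exactly what the paper does: by Proposition \ref{prop:standardfrorm}, $\Gamma\simeq{\rm T}_{p_1}(S,\ell_1({\bf T}))\otimes_S\cdots\otimes_S{\rm T}_{p_n}(S,\ell_n({\bf T}))$ as $\Z$-graded $S$-algebras, and since the graded ideal $\ell_i({\bf T})S\subseteq S$ sheafifies to $\O(-L_i)\subseteq\O$, this algebra sheafifies on the nose to ${\rm T}_{p_1}(\O,\O(-L_1))\otimes_\O\cdots\otimes_\O{\rm T}_{p_n}(\O,\O(-L_n))=\Lambda$. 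With that replacement your argument closes, and it then coincides with the paper's proof.
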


After Beilinson's work \cite{Be}, various projective varieties are known to be derived equivalent to non-commutative algebras: 
for example, Hirzebruch surfaces \cite{Ki}, rational surfaces \cite{HP}, homogeneous spaces \cite{Kap,Kan,BLV} and so on.
The notion of tilting bundles is crucial to construct derived equivalences.
In representation theory, tilting bundles on Geigle-Lenzing weighted projective lines \cite{GL} play an important role since they give Ringel's canonical algebras \cite{R} as their endomorphism algebras.
One of the basic results in \cite{HIMO} (see also \cite{Ba,IU}) is the existence of tilting bundles on GL projective spaces.
Recall that $T\in\mod \Lambda$ is a \emph{tilting $\Lambda$-module} if it satisfies the 
following two conditions:
\begin{itemize}
	\item Rigidity condition: $\Ext^i_\Lambda(T,T)=0$ for all $i>0$,
	\item Generation condition: $\DDD^{\bo}(\mod A)=\thick T$, where $\thick T$ is the smallest triangulated subcategory of $\DDD^{\bo}(\mod A)$ which is closed under direct summands and contains $T$.
\end{itemize}
The existence of such a tilting bundle gives rise to a derived equivalence between $\Lambda$ and $\End_\Lambda(T)$.

We will give a simple proof of the following result in the language of orders:
\begin{theorem}[Theorem \ref{tilting}]
Let $\Lambda$ be a GL order on $\P^d$. 
\begin{itemize}
\item[(a)] There exists a tilting bundle $T$ in $\mod \Lambda$.
\item[(b)] We have a triangle equivalence $\DDD^{\bo}(\mod \Lambda)\simeq\DDD^{\bo}(\mod\End_\Lambda(T))$.
\item[(c)] $\Lambda$ has global dimension $d$.
\end{itemize}
\end{theorem}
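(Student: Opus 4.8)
The plan is to exhibit one explicit tilting bundle, verify the two defining conditions by reducing them to cohomology on $\P^d$, and then obtain (b) and (c) essentially formally. For the candidate I take
\[
T=\bigoplus_{i=0}^{d}\Lambda(i),\qquad \Lambda(i):=\Lambda\otimes_{\O_{\P^d}}\O_{\P^d}(i),
\]
the order-theoretic lift of Beilinson's generator $\O\oplus\O(1)\oplus\cdots\oplus\O(d)$ on $\P^d$. Each $\Lambda(i)$ is a locally projective left $\Lambda$-module, so $T$ is genuinely a bundle and $\End_\Lambda(T)$ is a finite-dimensional $k$-algebra; this finiteness is what makes (b) formal once (a) is known.

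For the rigidity condition I would exploit that each $\Lambda(i)$ is locally projective: the sheaf-level $\mathcal{E}xt^{q}_\Lambda(\Lambda(i),-)$ then vanish for $q>0$, so the local-to-global spectral sequence collapses and gives
\[
\Ext^{j}_\Lambda(\Lambda(i),\Lambda(i'))\iso H^{j}\!\bigl(\P^d,\mathcal{H}om_\Lambda(\Lambda(i),\Lambda(i'))\bigr)\iso H^{j}\!\bigl(\P^d,\Lambda(i'-i)\bigr).
\]
Hence $\Ext^{>0}_\Lambda(T,T)=0$ is equivalent to the vanishing $H^{j}(\P^d,\Lambda(m))=0$ for all $j>0$ and all $m$ with $-d\le m\le d$, a statement purely on $\P^d$. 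I would deduce this from the explicit description of $\Lambda$ as an $\O_{\P^d}$-module — a finite sum (or filtration with such subquotients) of line bundles $\O_{\P^d}(b)$ — by showing that every twist $b$ lands in the range for which the cohomology of line bundles on $\P^d$ forces $H^{>0}(\O_{\P^d}(b+m))=0$ throughout the Beilinson window $|m|\le d$. This cohomological vanishing is the technical heart of the proof and the step I expect to be the main obstacle: it is exactly here that the combinatorics of the GL weights must be controlled so that no twist escapes the vanishing range.

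For the generation condition I would lift Beilinson's resolution of the diagonal to the order: tensoring the length-$d$ Koszul resolution of $\O_\Delta$ on $\P^d\times\P^d$ by $\Lambda$ on each factor produces a length-$d$ resolution of the diagonal $\Lambda$-bimodule whose left-hand factors are $d+1$ consecutive Serre twists of $\Lambda$. Convolving this resolution with an object presents every $M\in\DDD^{\bo}(\mod\Lambda)$ as a finite iterated extension of these twists, and since the thick subcategory generated by any $d+1$ consecutive twists is the same, this yields $\DDD^{\bo}(\mod\Lambda)=\thick T$. Together with the rigidity above, this shows $T$ is a tilting bundle and proves (a).

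Part (b) is then the standard consequence of tilting theory: a tilting bundle $T$ with finite-dimensional endomorphism algebra $\Gamma:=\End_\Lambda(T)$, living in the derived category of the Noetherian, Ext-finite category $\mod\Lambda$, gives a triangle equivalence $\RHom_\Lambda(T,-)\colon\DDD^{\bo}(\mod\Lambda)\to\DDD^{\bo}(\mod\Gamma)$. For (c) I would argue locally, which is where the order language pays off: at each closed point $x\in\P^d$ the completed stalk $\Lambda_x$ is an explicit order over the $d$-dimensional regular local ring $\O_{\P^d,x}$ — a matrix algebra over $\O_{\P^d,x}$ away from the branch locus, and a standard hereditary-type order along it — and in each case a direct computation gives $\gl\Lambda_x=d$. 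Since the global dimension of an order is a local invariant, $\gl\Lambda=\sup_x\gl\Lambda_x=d$, proving (c); alternatively (c) drops out of the exact equivalence $\coh\X\simeq\mod\Lambda$ of Theorem \ref{graded vs order} together with the fact that a GL projective space has global dimension $d$.
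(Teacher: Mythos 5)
Your proposal has a fatal gap at exactly the place you flagged as the ``technical heart'': the candidate $T=\bigoplus_{i=0}^{d}\Lambda(i)$ is \emph{not} rigid, and no control of the weight combinatorics can save it. As an $\O_{\P^d}$-module, $\Lambda=\Lambda_1\otimes_\O\cdots\otimes_\O\Lambda_n$ is the direct sum of its matrix entries, and among these is the corner entry $\bigotimes_{p_i\ge2}\O(-L_i)\simeq\O(-n')$, where $n'$ is the number of indices with $p_i\ge 2$. Hence $\Lambda(-d)$ contains $\O(-n'-d)$ as an $\O$-module summand, and by the same local-to-global collapse you invoke,
\[
\Ext^d_\Lambda(\Lambda(d),\Lambda)\simeq H^d\bigl(\P^d,\Lambda(-d)\bigr)\supseteq H^d\bigl(\P^d,\O(-n'-d)\bigr)\neq 0
\]
whenever $n'\ge1$, i.e.\ whenever the order is genuinely weighted. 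Concretely, for $d=1$, $n=1$, $p_1=2$ one gets $\Ext^1_\Lambda(\Lambda(1),\Lambda)\simeq H^1(\P^1,\Lambda(-1))\supseteq H^1(\P^1,\O(-2))\neq0$. The problem is that the integer twists $\Lambda(i)$ carry all $p_1\cdots p_n$ columns of $\Lambda$ at once, and the columns themselves already spread the $\O$-twists over a window of width $n'$, pushing Hom-computations outside Beilinson's range $[-d,d]$. The paper's construction repairs precisely this: it keeps only the single column $P=\Lambda e$ (all of whose entries are $\O$) and introduces \emph{fractional} twists by invertible bimodules $I_i$ satisfying $I_i^{\otimes_\Lambda p_i}=\Lambda\otimes_\O\O(L_i)$, giving an action of $\L=\gen{\x_1,\ldots,\x_n,\c}/(p_i\x_i-\c)$, and sets $T=\bigoplus_{\x\in[0,d\c]}P(\x)$. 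For that $T$ every computation reduces to $H^i(\P^d,\O(\ell))$ with $-d\le\ell\le d$, because $e(P(\y-\x))$ sees only the integer part of $\y-\x$ (Lemma \ref{first entry 2}). Note that your bundle decomposes as a sum of modules $P(\x)$ with $\x$ ranging \emph{outside} $[0,d\c]$, which is exactly why it generates but is not rigid: it is a generator, not a tilting bundle.

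Two further points. First, your claim that $\gl\Lambda=\sup_x\gl\Lambda_x$ because global dimension ``is a local invariant'' is not automatic for a sheaf of algebras on a projective variety: the local-to-global spectral sequence $H^p(\P^d,{\mathscr Ext}^q_\Lambda(X,Y))\Rightarrow\Ext^{p+q}_\Lambda(X,Y)$ a priori only bounds $\gl\Lambda$ by $2d$. The paper closes this (Proposition \ref{global dimension}, following Burban--Drozd) by showing ${\mathscr Ext}^q_\Lambda(X,Y)$ is supported in dimension at most $d-q$, which uses the sharper local statement $\gl\Lambda_x=d-\dim x$ at non-closed points. Second, the paper's generation argument is not a resolution-of-the-diagonal convolution (which would need care to retain $\Lambda$-module structures on the convolution terms); it proves the weaker statement that $\Ext^{i}_\Lambda(T,X)=0$ for all $i\ge0$ forces $X=0$, by an induction that regards $e'X$ as a sheaf on intersections $\P^{d-N}$ of the hyperplanes and applies Beilinson there, and then upgrades this to $\thick T=\DDD^{\bo}(\mod\Lambda)$ using finite global dimension and contravariant finiteness. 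Whatever route you take for generation, you must first replace your candidate by the paper's $T$, since rigidity fails irreparably for $\bigoplus_{i=0}^{d}\Lambda(i)$.
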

In fact, we will explicitly construct a tilting bundle $T$.
Crucially, our proof is geometric for it uses
the theorem of Beilinson \cite{Be} regarding the existence of a tilting bundle on $\P^d$.

\medskip
A similar construction of tilting bundles in a more general setup will be discussed in a joint work \cite{LO} with Oppermann.

\medskip\noindent
{\bf Acknowledgements.}
The authors thank Kenneth Chan for valuable discussion leading to this work.
They thank Colin Ingalls, Gustavo Jasso and Steffen Oppermann for stimulating discussions.

\section{Tilting bundles on orders}\label{Tilting}

Let $\P^d$ be a projective $d$-space and fix $n\geq 0$ hyperplanes 
${\bf L}=(L_1,\dots,L_n)$ on $\P^d$ as well as weights ${\bf p}=(p_1,\dots,p_n)$ with $p_i\in\Z_{\ge0}$. We assume
that the hyperplanes are in general position in the following sense:
\begin{assumption}\label{general position}
For any subset $\{i_1,\ldots,i_m\}$, the intersection $\bigcap_{k=1}^mL_{i_k}$
is codimension $m$ in $\P^d$ or is empty if $m>d$.
\end{assumption}

For a triple $(\O,I,n)$ of a sheaf of rings $\O$ (or a ring), an ideal sheaf $I$ of $\O$ (or an
ideal) and a
positive integer $n$, let ${\rm T}_n(\O,I)$ be the subsheaf
\[ {\rm T}_n(\O,I)=\left[\begin{array}{ccccc}
		\O&I&\cdots&I&I\\
		\O&\O&\cdots&I&I\\
		\vdots&\vdots&\ddots&\vdots&\vdots\\
		\O&\O&\cdots&\O&I\\
		\O&\O&\cdots&\O&\O
\end{array}\right]\]
of the sheaf ${\rm M}_n(\O)$ of full matrix rings.

For the structure sheaf $\O:=\O_{\P^d}$ of $\P^d$, let
\begin{eqnarray*}
\Lambda_i&:=&{\rm T}_{p_i}(\O,\O(-L_i))\\
\Lambda=\Lambda({\bf L}, {\bf p})&:=& \Lambda_1\otimes_{\O}\cdots\otimes_{\O}\Lambda_n
\end{eqnarray*}
which can be regarded as a suborder of ${\rm M}_{p_1\cdots p_n}(\O_{\P^d})$.
We call $\Lambda$ a \emph{Geigle-Lenzing} (\emph{GL}) \emph{order} on $\P^d$ of \emph{type} $({\bf L},{\bf p})$.
Note that the authors of \cite{CI} call the transpose of 
this $\Lambda$ the \emph{canonical matrix form} of its Morita equivalence class.

\begin{theorem}\label{tilting}
Let $\Lambda$ be a GL order on $\P^d$.
\begin{itemize}
\item[(a)] There exists a tilting bundle $T$ in $\mod \Lambda$ given in \eqref{define T} below.
\item[(b)] We have a triangle equivalence $\DDD^{\bo}(\mod \Lambda)\simeq\DDD^{\bo}(\mod\End_\Lambda(T))$.
\item[(c)] $\Lambda$ has global dimension $d$.
\end{itemize}
\end{theorem}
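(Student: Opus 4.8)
The plan is to construct $T$ explicitly and then verify the three tilting conditions, leveraging Beilinson's theorem on $\P^d$ as the geometric backbone. First I would build the tilting bundle $T$ out of the data of the order. Since $\Lambda$ sits inside ${\rm M}_{p_1\cdots p_n}(\O)$ as an iterated tensor product of triangular pieces ${\rm T}_{p_i}(\O,\O(-L_i))$, it carries a natural system of idempotents indexed by tuples $\vec{a}=(a_1,\dots,a_n)$ with $0\le a_i<p_i$, and correspondingly a decomposition of the regular module into indecomposable projective summands $P_{\vec a}=\Lambda e_{\vec a}$. The candidate tilting bundle should be of the form
\[T=\bigoplus_{\vec a}\Bigl(\Lambda e_{\vec a}\otimes_{\O}\O(j_{\vec a})\Bigr)\]
where the twists $j_{\vec a}$ range over a suitable window so that, after applying the forgetful/restriction functor to $\O_{\P^d}$-modules, the direct summands hit exactly the Beilinson generators $\O,\O(1),\dots,\O(d)$ on $\P^d$. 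Making this choice precise so that $T$ simultaneously respects the triangular structure of $\Lambda$ and the twisting sheaf structure is the first real task.

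Next I would establish the generation condition. The key idea is that $\Lambda$ is a sheaf of $\O_{\P^d}$-algebras that is locally free and finitely generated as an $\O$-module, so there is a forgetful functor $\mod\Lambda\to\coh\P^d$ and the structure morphism lets us transport Beilinson's generating set. Concretely, because $\Lambda$ is built from $\O$ by the triangular and tensor constructions, the summands $\Lambda e_{\vec a}$ generate $\mod\Lambda$ over the center as soon as their underlying $\O$-modules generate $\coh\P^d$. I would argue that $\thick T$ contains $\Lambda\otimes_\O\O(j)$ for $j=0,\dots,d$ by using the triangular filtration of $\Lambda$: the off-diagonal entries $I=\O(-L_i)$ give short exact sequences relating the various $P_{\vec a}$, and pushing the Beilinson resolution of the diagonal through $\Lambda$ shows that every object of $\DDD^{\bo}(\mod\Lambda)$ is built from the summands of $T$. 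This reduces generation on the order to generation on $\P^d$, which is exactly Beilinson's theorem.

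For the rigidity condition I would compute $\Ext^i_\Lambda(T,T)$ via the adjunction between $\mod\Lambda$ and $\coh\P^d$ induced by the $\O$-algebra structure, reducing each Ext-group to a cohomology computation on $\P^d$. Each $\Hom$ and higher $\Ext$ between summands $\Lambda e_{\vec a}\otimes\O(j)$ and $\Lambda e_{\vec b}\otimes\O(j')$ unwinds, using that $\Lambda e_{\vec a}$ is a projective $\Lambda$-module, into a sum of cohomology groups of line bundles $\O(j'-j)$ and twists by $\O(-L_i)=\O(-1)$ on $\P^d$. The window $0\le j\le d$ ensures these twists lie in the range where $H^i(\P^d,\O(m))=0$ for all $i>0$, which kills the higher Ext-groups and yields rigidity. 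Here Assumption \ref{general position} enters to guarantee that the relevant intersection loci have the expected codimension, so the local structure of $\Lambda$ contributes no unexpected higher cohomology.

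Finally, part (b) is then automatic: a tilting bundle with endomorphism ring $B=\End_\Lambda(T)$ yields the triangle equivalence $\DDD^{\bo}(\mod\Lambda)\simeq\DDD^{\bo}(\mod B)$ by the standard tilting theorem, once (a) is in hand. For part (c), the global dimension, I would compute projective resolutions of the simple $\Lambda$-modules directly: the triangular shape of each ${\rm T}_{p_i}(\O,\O(-L_i))$ contributes resolutions built from the ideal sheaves $\O(-L_i)$, and combining these with the Koszul-type resolution coming from the $d$-dimensional base $\P^d$ gives that every simple has projective dimension at most $d$, with equality realized by a simple supported at a point. The main obstacle throughout is the first paragraph's bookkeeping: pinning down the exact twists $j_{\vec a}$ so that the single bundle $T$ simultaneously satisfies generation (needing enough Beilinson twists) and rigidity (needing the twists confined to a cohomology-vanishing window), and verifying that the interaction between the noncommutative triangular structure and the geometric twisting does not produce stray higher Ext-groups.
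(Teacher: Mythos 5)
Your proposal identifies the right skeleton (reduce to Beilinson, prove rigidity by cohomology vanishing, prove generation, get (b) formally, get (c) locally), but it has two genuine gaps, and they are exactly the two points where the paper has to do real work. First, the tilting bundle is never actually constructed, and the natural reading of your candidate --- all indecomposable projectives $\Lambda e_{\vec a}$ twisted into a window $\O,\dots,\O(d)$ --- \emph{fails} rigidity. Indeed, $\Ext^i_\Lambda(\Lambda e_{\vec a}\otimes_\O\O(j),\Lambda e_{\vec b}\otimes_\O\O(j'))\simeq H^i(\P^d, e_{\vec a}\Lambda e_{\vec b}\otimes_\O\O(j'-j))$, and $e_{\vec a}\Lambda e_{\vec b}\simeq\O(-\sum_{i\in S}L_i)\simeq\O(-|S|)$ for some subset $S\subseteq\{1,\dots,n\}$; once $j'-j-|S|<-d$ the top cohomology $H^d$ is nonzero. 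Already for $d=1$, $n=1$, $p_1=2$, with $\Lambda={\rm T}_2(\O,\O(-L_1))$, one gets $\Ext^1_\Lambda(\Lambda e_1\otimes_\O\O(1),\Lambda e_2)\simeq H^1(\P^1,\O(-2))\neq0$. The paper's way out of this tension is the group $\L$ with fractional generators $\x_i$ satisfying $p_i\x_i=\c$, and $T=\bigoplus_{\x\in[0,d\c]}P(\x)$: only twists of the single projective $P=\Lambda e$ occur, and Lemma \ref{first entry 2} guarantees that every relative twist $e(P(\y-\x))$ is $\O(\ell)$ with $-d\le\ell\le d$. This combinatorial device is precisely what your last paragraph flags as the obstacle, but the proposal does not overcome it.

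Second, the generation argument as you state it would fail. Pushing the Beilinson resolution through $\Lambda$ shows that $\Lambda\otimes_\O M$ lies in $\thick\bigl(\bigoplus_{j=0}^d\Lambda\otimes_\O\O(j)\bigr)$ for $M\in\mod\Lambda$, but not that $M$ itself does: the multiplication map $\Lambda\otimes_\O M\to M$ has an $\O$-linear splitting $m\mapsto 1\otimes m$ which is not $\Lambda$-linear, so $M$ is not a direct summand of $\Lambda\otimes_\O M$ in $\mod\Lambda$. The paper instead proves a weak generation statement (Proposition \ref{generate}: $\Ext^i_\Lambda(T,X)=0$ for all $i\ge0$ forces $X=0$) by induction on the number of non-standard idempotent factors; the key point is that the corner $e'X$ is killed by the ideals $\O(-L_i)$ and hence is a sheaf on an intersection $\bigcap L_i\cong\P^{d-N}$, where Beilinson's theorem in lower dimension applies. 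This, not rigidity, is where Assumption \ref{general position} is used (your proposal invokes it during rigidity, which in the paper is a direct cohomology computation needing no such hypothesis); it is also needed, via regular sequences, for part (c). The weak statement is then upgraded to $\thick T=\DDD^{\bo}(\mod\Lambda)$ using finiteness of the global dimension of $\Lambda$ --- which is why the paper proves (c) \emph{before} finishing generation --- together with a contravariant-finiteness argument of Iyama--Yoshino. Some such upgrade is unavoidable: orthogonality-detects-zero does not by itself yield strong generation. Your part (b) is fine (Keller's theorem), and your sketch of (c) matches the paper's local computation in spirit (though one still needs the stalkwise-to-global step via the local-global spectral sequence), but as it stands the proposal neither pins down the tilting object nor gives a valid proof that it generates.
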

The construction of $T$ is as follows: First we define a $\Lambda$-module $P$ by
\[P_i:=\left[\begin{array}{c}
\O\\ \O\\ \vdots\\ \O\\ \O
\end{array}\right]\in\mod \Lambda_i\ \mbox{ and }\ 
P:=P_1\otimes_{\O}\cdots\otimes_{\O}P_n\in\mod \Lambda.\]
This is a direct summand of the $\Lambda$-module $\Lambda$ and can be described as
\[P=\Lambda e,\ \mbox{ where }\ e:=e_1\otimes\cdots\otimes e_n\in H^0(\P^d, \Lambda)
\ \mbox{ for }\ 
e_i:=\left[\begin{array}{ccccc}
1&0&\cdots&0&0\\
0&0&\cdots&0&0\\
\vdots&\vdots&\ddots&\vdots&\vdots\\
0&0&\cdots&0&0\\
0&0&\cdots&0&0
\end{array}\right]\in H^0(\P^d, \Lambda_i).\]
Next for each $i=1,\ldots,n$, we define an invertible $\Lambda_i$-bimodule $J_i$ by
\[J_i:=\left[\begin{array}{ccccccc}
\O&\O&\O(-L_i)&\cdots&\O(-L_i)&\O(-L_i)&\O(-L_i)\\
\O&\O&\O&\cdots&\O(-L_i)&\O(-L_i)&\O(-L_i)\\
\O&\O&\O&\cdots&\O(-L_i)&\O(-L_i)&\O(-L_i)\\
\vdots&\vdots&\vdots&\ddots&\vdots&\vdots&\vdots\\
\O&\O&\O&\cdots&\O&\O&\O(-L_i)\\
\O&\O&\O&\cdots&\O&\O&\O\\
\O(L_i)&\O&\O&\cdots&\O&\O&\O
\end{array}\right]\]

The following can be easily checked:

\begin{observation}\label{obs:what J does}
$J_i^\ell\otimes_\Lambda P_i$ is the $(1-\ell)$-th (modulo $p_i$) column of $\Lambda_i\otimes_\O\O(\lceil
\ell/p_i \rceil L_i)$,
where $\lceil x\rceil$ is the smallest integer $a$ satisfying $a\geq x$.
\end{observation}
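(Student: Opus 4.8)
My plan is to reduce the statement to a single computation with the columns of $J_i$ and then iterate. Write $p=p_i$ and $L=L_i$, let $\varepsilon_j$ denote the diagonal idempotent of $\Lambda_i$ in position $(j,j)$ (so $\varepsilon_1=e_i$ and $\Lambda_i\varepsilon_j$ is the $j$-th column of $\Lambda_i$), and set $F:=J_i\otimes_{\Lambda_i}(-)$. Since $P_i=\Lambda_ie_i=\Lambda_i\varepsilon_1$, the standard isomorphism $M\otimes_{\Lambda_i}\Lambda_i\varepsilon\iso M\varepsilon$ identifies $J_i^\ell\otimes_{\Lambda_i}P_i$ with the first column $J_i^\ell\varepsilon_1$ of the $\ell$-fold tensor power $J_i^\ell$, that is, with $F^\ell(\Lambda_i\varepsilon_1)$. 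Thus it suffices to compute $F^\ell$ on the first indecomposable projective.

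The first step is to read off $F$ on each column directly from the displayed matrix for $J_i$. Inspection shows that $J_i$ is nothing but $\Lambda_i$ with its columns cyclically shifted one place to the right, the single wraparound being accompanied by a twist: the $j$-th column of $J_i$ equals the $(j-1)$-th column of $\Lambda_i$ for $2\le j\le p$, whereas the first column of $J_i$ equals the $p$-th column of $\Lambda_i\otimes_\O\O(L)$. In the language of $F$ this reads
\[F(\Lambda_i\varepsilon_j)\iso\Lambda_i\varepsilon_{j-1}\quad(2\le j\le p),\qquad F(\Lambda_i\varepsilon_1)\iso(\Lambda_i\varepsilon_p)\otimes_\O\O(L),\]
so $F$ always lowers the column index by one modulo $p$ and contributes exactly one factor of $\O(L)$, and it does so precisely at the step leaving column $1$.

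Iterating is now bookkeeping. Since $\O(L)$ is an invertible $\O$-module, it commutes past $F$ by associativity of the tensor product, so no twist is lost and $F^\ell(\Lambda_i\varepsilon_1)$ is the $(1-\ell)$-th column of $\Lambda_i$, indices taken modulo $p$, tensored over $\O$ with one factor of $\O(L)$ for each application of $F$, among the first $\ell$, that begins in column $1$. Starting at column $1$, these are exactly the steps $k\in\{1,\dots,\ell\}$ with $k\equiv1\pmod p$, of which there are $\lfloor(\ell-1)/p\rfloor+1=\lceil\ell/p\rceil$ when $\ell\ge1$ (and none when $\ell=0$). Hence $F^\ell(\Lambda_i\varepsilon_1)$ is the $(1-\ell)$-th column of $\Lambda_i\otimes_\O\O(\lceil\ell/p\rceil L)$, which is the claim.

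Everything here is elementary, in keeping with the word ``easily'' in the statement; the only place that rewards attention is the twist count, namely confirming that the number of wraparounds past the first column equals $\lceil\ell/p_i\rceil$ rather than $\lfloor\ell/p_i\rfloor$, which the base cases $\ell=0,1$ pin down. (Although the statement writes $\otimes_\Lambda$, the relevant tensor product is the one over the factor $\Lambda_i$, which is all that the computation uses.)
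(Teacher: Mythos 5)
Your proof is correct. The paper offers no argument for this Observation (it is stated as something that ``can be easily checked''), and your computation is precisely the intended verification: reading off from the displayed matrix that $J_i\otimes_{\Lambda_i}-$ shifts the column index down by one modulo $p_i$ and picks up a twist by $\O(L_i)$ exactly when leaving the first column, then counting the wraparounds among $\ell$ iterations as $\lfloor(\ell-1)/p_i\rfloor+1=\lceil \ell/p_i\rceil$. Your parenthetical remark that the tensor product written as $\otimes_\Lambda$ in the statement is really the one over the factor $\Lambda_i$ correctly identifies a slight abuse of notation in the paper.
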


We define an autofunctor $(-)(\x_i)$, which we abbreviate simply by $(\x_i)$, of $\mod \Lambda_i$ by $(\x_i):=J_i\otimes_\O-\colon\mod \Lambda_i\to\mod \Lambda_i$.
Then we extend this action to $\mod \Lambda$ by first introducing an invertible $\Lambda$-bimodule $I_i$ by
\[I_i:=\Lambda_1\otimes_{\O}\cdots\otimes_{\O}J_i\otimes_{\O}\cdots\otimes_{\O}\Lambda_n\]
and defining an autofunctor $(\x_i)$ of $\mod \Lambda$ for each $i=1,\ldots,n$ by
\[(\x_i):=I_i\otimes_\Lambda-\colon\mod \Lambda\to\mod \Lambda.\]
By a simple matrix multiplication, one can easily check the following:

\begin{observation}
Since $\overbrace{J_i\otimes_{\Lambda_i}\cdots\otimes_{\Lambda_i}J_i}^{p_i}=\Lambda_i\otimes_\O\O(L_i)$, 
we have $\overbrace{I_i\otimes_\Lambda\cdots\otimes_\Lambda I_i}^{p_i}=\Lambda\otimes_\O\O(L_i)$ and

\begin{equation}\label{pixi}
	(p_i\x_i)=-\otimes_\O\O(L_i)\simeq -\otimes_\O\O(1).
\end{equation}

\end{observation}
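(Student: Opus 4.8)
The plan is to verify the three assertions in turn: first the per-factor identity for $J_i$, then its promotion to the global order $\Lambda$, and finally its translation into the statement about the autofunctor $(\x_i)$.

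First I would establish the displayed identity that the $p_i$-fold tensor product $J_i\otimes_{\Lambda_i}\cdots\otimes_{\Lambda_i}J_i$ equals $\Lambda_i\otimes_\O\O(L_i)$ by direct matrix computation. Both $\Lambda_i$ and $J_i$ are subsheaves of ${\rm M}_{p_i}(\O)$ whose entries lie among the fractional ideals $\O(-L_i)\subseteq\O\subseteq\O(L_i)$, and since $J_i$ is an invertible $\Lambda_i$-bimodule the multiplication map identifies $J_i\otimes_{\Lambda_i}J_i$ with the product $J_i\cdot J_i$ computed inside ${\rm M}_{p_i}(\O)$, entrywise by $(J_i\cdot J_i)_{jk}=\sum_l (J_i)_{jl}(J_i)_{lk}$, using $\O(aL_i)\cdot\O(bL_i)=\O((a+b)L_i)$ and that a sum of such ideals is the one of maximal exponent. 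Each multiplication by $J_i$ shifts the \emph{staircase} separating the $\O$-entries from the $\O(-L_i)$-entries by one step, so after $p_i$ steps the staircase returns to its starting position while the wrapped-around corner entry has acquired a global factor $\O(L_i)$; this yields $\Lambda_i\otimes_\O\O(L_i)$. Equivalently, this is precisely Observation \ref{obs:what J does} at $\ell=p_i$ applied to every column, since the $(1-p_i)$-th column modulo $p_i$ is again the first column and $\lceil p_i/p_i\rceil=1$.

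Next I would deduce $I_i\otimes_\Lambda\cdots\otimes_\Lambda I_i=\Lambda\otimes_\O\O(L_i)$. Because $\Lambda=\Lambda_1\otimes_\O\cdots\otimes_\O\Lambda_n$ and $I_i=\Lambda_1\otimes_\O\cdots\otimes_\O J_i\otimes_\O\cdots\otimes_\O\Lambda_n$ with $J_i$ in the $i$-th slot, the tensor product over $\Lambda$ factors slotwise through the tensor products over the individual $\Lambda_j$, where $\Lambda_j\otimes_{\Lambda_j}\Lambda_j=\Lambda_j$ for $j\neq i$. Hence the $p_i$-fold $\otimes_\Lambda$-power of $I_i$ equals $\Lambda_1\otimes_\O\cdots\otimes_\O(J_i\otimes_{\Lambda_i}\cdots\otimes_{\Lambda_i}J_i)\otimes_\O\cdots\otimes_\O\Lambda_n$; substituting the first identity and pulling the invertible central twist $\O(L_i)$ out of the $\otimes_\O$-product gives $\Lambda\otimes_\O\O(L_i)$.

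The statement \eqref{pixi} about functors then follows formally: since $(\x_i)=I_i\otimes_\Lambda-$, iterating $p_i$ times gives $(p_i\x_i)=(I_i\otimes_\Lambda\cdots\otimes_\Lambda I_i)\otimes_\Lambda-=(\Lambda\otimes_\O\O(L_i))\otimes_\Lambda-=-\otimes_\O\O(L_i)$, and the isomorphism $\O(L_i)\cong\O(1)$ holds because $L_i$ is a hyperplane, i.e. a divisor of degree one on $\P^d$. The main obstacle is the bookkeeping in the matrix computation of the first step: one must interpret $J_i\otimes_{\Lambda_i}J_i$ as a product of sheaves of matrices over the noncommutative order $\Lambda_i$ rather than a naive entrywise operation, and check that the multiplication map is an isomorphism onto $J_i\cdot J_i$. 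Invoking Observation \ref{obs:what J does} sidesteps essentially all of this, leaving only the two routine formal steps.
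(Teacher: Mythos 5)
Your proposal is correct and takes essentially the same route as the paper, which simply asserts that the key identity $J_i\otimes_{\Lambda_i}\cdots\otimes_{\Lambda_i}J_i=\Lambda_i\otimes_\O\O(L_i)$ ``can be easily checked by a simple matrix multiplication'' and then, exactly as you do, treats the remaining two claims as formal consequences. One cosmetic correction: $J_i$ is not a subsheaf of ${\rm M}_{p_i}(\O)$ (its lower-left entry is $\O(L_i)$), so the entrywise products in your staircase computation should be carried out inside the matrix algebra over the sheaf of rational functions; this does not affect the argument.
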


Next we introduce the following rank $1$ group:
\[\L=\L({\bf p}) = \gen{\x_1,\cdots,\x_n,\c}/({p_i\x_i-\c}\;|\;
1\leq i\leq n)\]
By \eqref{pixi}, we get an objectwise
action of $\L$ on $\mod \Lambda$.

Now we denote by $\L_+$ the submonoid of $\L$ generated by $\x_1,\ldots,\x_n$, and we regard $\L$ as a partially ordered set by: $\x\le \y$ if and only if $\y-\x\in\L_+$.
If we let $[0,d\c]:=\{\x\in\L\mid0\le\x\le d\c\}$
then the $\Lambda$-module
\begin{equation}\label{define T}
T:=\bigoplus_{\x\in[0,d\c]}P(\x)
\end{equation}
gives a tilting bundle in Theorem \ref{tilting}. 

\medskip
A proof of Theorem \ref{tilting} is given in the rest of this section.
The basic idea is to reduce 
our problem for $\mod \Lambda$ to the corresponding one in $\mod\O$ and use the
following Beilinson's result:

\begin{theorem}\label{beilinson}\cite{Be}
$\bigoplus_{i=0}^d\O(i)$ is a tilting bundle in $\coh\P^d$.
\end{theorem}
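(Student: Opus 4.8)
The plan is to verify the two defining properties of a tilting bundle for $T:=\bigoplus_{i=0}^d\O(i)$ directly: the rigidity condition $\Ext^j_{\P^d}(T,T)=0$ for $j>0$, and the generation condition $\thick T=\DDD^{\bo}(\coh\P^d)$. Both reduce to classical facts about line bundles on projective space, and I would treat them in turn; the rigidity is a short cohomology computation, while the generation is the substantive part.

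For rigidity I would use the identification $\Ext^j_{\P^d}(\O(i),\O(i'))\simeq H^j(\P^d,\O(i'-i))$. Since $0\le i,i'\le d$, the twist $m:=i'-i$ satisfies $-d\le m\le d$. The standard computation of the cohomology of line bundles on $\P^d$ gives $H^j(\P^d,\O(m))=0$ for all $0<j<d$ and every $m$, while $H^d(\P^d,\O(m))=0$ precisely when $m\ge-d$ (this last fact follows from Serre duality, identifying $H^d(\P^d,\O(m))$ with the dual of $H^0(\P^d,\O(-m-d-1))$). As $m\ge-d$ throughout our range, every positive cohomology group vanishes, establishing $\Ext^j_{\P^d}(T,T)=0$ for $j>0$.

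For generation I would argue that $\thick T=\DDD^{\bo}(\coh\P^d)$ in two moves. First, by the Hilbert syzygy theorem every finitely generated graded module over $k[x_0,\dots,x_d]$ admits a finite free resolution; sheafifying, every coherent sheaf on $\P^d$ has a finite resolution by finite direct sums of line bundles $\O(m)$, $m\in\Z$. Hence the set $\{\O(m)\}_{m\in\Z}$ generates $\DDD^{\bo}(\coh\P^d)$, and it suffices to place each $\O(m)$ in $\thick T$. Second, I would feed in the Koszul resolution attached to the tautological surjection $\O(-1)^{\oplus(d+1)}\to\O$ determined by the coordinates $x_0,\dots,x_d$ (which have no common zero), in its twisted form
\[0\to\O(m-d-1)\to\O(m-d)^{\oplus\binom{d+1}{d}}\to\cdots\to\O(m-1)^{\oplus(d+1)}\to\O(m)\to0.\]
Since an exact complex exhibits each of its terms as an iterated cone and shift of the others inside the triangulated category $\DDD^{\bo}(\coh\P^d)$, this sequence shows simultaneously that $\O(m)\in\thick(\O(m-1),\dots,\O(m-d-1))$ and that $\O(m-d-1)\in\thick(\O(m-d),\dots,\O(m))$. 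Starting from the summands $\O(0),\dots,\O(d)$ of $T$, the first relation (with $m=d+1,d+2,\dots$) produces all $\O(m)$ with $m>d$ by upward induction, while the second relation (with $m=d,d-1,\dots$) produces all $\O(m)$ with $m<0$ by downward induction. Thus every line bundle lies in $\thick T$, and generation follows.

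The routine inputs—the cohomology table of $\O(m)$ and the exactness of the Koszul complex—are classical, so the conceptual crux is the generation step: one must invoke the Hilbert syzygy theorem to reduce to line bundles and then run the bidirectional Koszul induction, taking care that these statements are genuinely triangulated, so that an exact complex of sheaves legitimately realizes any one of its terms as built from the remaining ones by taking cones and shifts. This is where I would expect to spend the most care.
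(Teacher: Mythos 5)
Your proof is correct, but note that the paper does not prove this statement at all: Theorem \ref{beilinson} is imported verbatim from \cite{Be}, and the paper only uses the two consequences recorded immediately after it (the vanishing $H^i(\P^d,\O(\ell))=0$ for $i>0$ and $-d\le\ell\le d$, and the fact that an object whose twists $X(-\ell)$, $0\le\ell\le d$, have no cohomology must vanish). So there is no in-paper argument to match; what you have supplied is the standard classical proof, and it differs from Beilinson's original one. Beilinson proves generation by constructing the resolution of the diagonal on $\P^d\times\P^d$ with terms $p_1^*\O(-i)\otimes p_2^*\Omega^i(i)$ and convolving against it, which buys more than the tilting statement: it yields the full strong exceptional collection, the semiorthogonal decomposition, and a functorial finite resolution of every object by the bundles $\O(i)$. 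Your route --- rigidity from the cohomology table of line bundles plus Serre duality, generation from the Hilbert syzygy theorem to reduce to the set $\{\O(m)\}_{m\in\Z}$, and then the twisted Koszul complex run upward from $m=d+1$ and downward from $m=d$ to trap every $\O(m)$ in $\thick\bigl(\bigoplus_{i=0}^d\O(i)\bigr)$ --- is more elementary and entirely self-contained, at the cost of this extra structure, and it is perfectly adequate for everything the paper needs.

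Two steps you compress are genuinely routine but worth stating if you write this out in full: first, that every coherent sheaf on $\P^d$ is the sheafification of a finitely generated graded module (so that the syzygy theorem applies), and second, that knowing $\thick T$ contains all coherent sheaves implies it contains all of $\DDD^{\bo}(\coh\P^d)$, since a bounded complex is a finite iterated extension of shifts of its cohomology sheaves via truncation triangles. Your key triangulated-category observation --- that an exact complex exhibits each term as an iterated cone on the others, by splitting it into short exact sequences --- is exactly the right mechanism and makes both the upward and downward Koszul inductions legitimate.
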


In particular we have
\begin{itemize}
\item $H^i(\P^d,\O(\ell))=0$ for all $i>0$ and all $\ell$ with $-d\le \ell\le d$.
\item $\DDD^{\bo}(\coh\P^d)=\thick\bigoplus_{i=0}^d\O(i)$.
\end{itemize}
The second condition implies that, if $X\in\coh\P^d$ satisfies $H^i(\P^d, X(-\ell))=0$ for all $i\ge0$ and all $\ell$ with $0\leq \ell\leq d$, then $X=0$.

\subsection{Proof the rigidity condition}

In this section we prove the following:

\begin{proposition}\label{ext vanishing}
We have $\Ext^i_\Lambda(T,T)=0$ for any $i>0$.
\end{proposition}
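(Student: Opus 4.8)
The plan is to reduce the computation of $\Ext^i_\Lambda(T,T)$ to sheaf cohomology on $\P^d$, where Beilinson's theorem gives the required vanishing. Since $T=\bigoplus_{\x\in[0,d\c]}P(\x)$, by the $\L$-action it suffices to understand $\Ext^i_\Lambda(P(\x),P(\y))$ for $\x,\y\in[0,d\c]$, and after twisting we may reduce to computing $\Ext^i_\Lambda(P,P(\z))$ for $\z=\y-\x$ ranging in the difference set $[0,d\c]-[0,d\c]\subseteq[-d\c,d\c]$. The first step is therefore to obtain a good projective resolution of $P$ as a $\Lambda$-module, or equivalently to compute $\Ext^i_\Lambda(P,-)$ in terms of a simpler functor. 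Since $P=\Lambda e$ is a summand of $\Lambda$, it is projective, so in fact $\Ext^i_\Lambda(P,M)=0$ for all $i>0$ and any $M$; thus the higher Ext groups vanish immediately, and the real content is the $i=0$ statement being consistent, together with verifying that $T$ is genuinely a bundle (locally projective).

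The key simplification I would exploit is that $P$ is projective: because $P=\Lambda e$, we have $\Hom_\Lambda(P,M)=eM$ naturally, and $\Ext^i_\Lambda(P,M)$ computes the derived functors of $M\mapsto eM$ together with the sheaf cohomology on $\P^d$. More precisely, for a $\Lambda$-module $M$ which is a bundle, $\Ext^i_\Lambda(P,M)$ should be computed by a hypercohomology spectral sequence whose only nontrivial input is $H^i(\P^d, eM)=H^i(\P^d, \Hom_\Lambda(P,M))$. The essential observation is Observation \ref{obs:what J does}: the twisted module $P(\z)$, for $P(\z)=I^{\z}\otimes_\Lambda P$, is described columnwise in terms of the bundles $\O(\lceil \ell/p_i\rceil L_i)$, and applying the idempotent $e$ (which selects the top-left corner) to such a module returns a direct sum of line bundles $\O(a_1 L_1+\dots+a_n L_n)=\O(m)$ for suitable integers $m$. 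So the plan is: first compute $\Hom_\Lambda(P,P(\z))=eP(\z)$ explicitly as a sum of line bundles on $\P^d$ using Observation \ref{obs:what J does}, then read off the degrees $m$ that occur.

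The heart of the argument is then to check that for every $\z\in[-d\c,d\c]$, each line bundle $\O(m)$ appearing in $e\,P(\z)$ has $-d\le m\le d$, so that Beilinson's vanishing $H^i(\P^d,\O(m))=0$ for $i>0$ applies. This is a bookkeeping step: the twist by $\c$ corresponds (via \eqref{pixi}, $p_i\x_i=\c$ and $\c=\O(1)$) to a global degree shift of $1$, and the ceiling functions $\lceil\ell/p_i\rceil$ appearing in Observation \ref{obs:what J does} must be controlled so that the total degree stays in the window $[-d,d]$. I expect the main obstacle to be precisely this combinatorial estimate: one must verify that for $\z$ ranging over the difference set of $[0,d\c]$, the degrees of all line-bundle summands of $eP(\z)$ lie within $[-d,d]$, which requires carefully tracking how the individual $\x_i$-twists (which shift by fractional amounts of $\c$) accumulate. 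Once this estimate is established, the Ext-vanishing $\Ext^i_\Lambda(P(\x),P(\y))=H^i(\P^d,\,e\,P(\y-\x))=0$ for $i>0$ follows, and summing over $\x,\y\in[0,d\c]$ gives $\Ext^i_\Lambda(T,T)=0$ as required.
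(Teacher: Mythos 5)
Your intended route---reduce $\Ext^i_\Lambda(P(\x),P(\y))$ to the sheaf cohomology $H^i(\P^d,e(P(\y-\x)))$, identify $e(P(\y-\x))$ as a line bundle whose degree lies in $[-d,d]$, and invoke Beilinson's theorem---is exactly the paper's proof (Lemmas \ref{cohomology} and \ref{first entry 2}). However, your write-up contains a genuine conceptual error that contradicts the rest of your own argument: you assert that since $P=\Lambda e$ is a direct summand of $\Lambda$ it is projective in $\mod\Lambda$, hence $\Ext^i_\Lambda(P,M)=0$ for all $i>0$ and all $M$, so that ``the higher Ext groups vanish immediately.'' This is false. In $\mod\Lambda$, the category of coherent sheaves of $\Lambda$-modules, the object $\Lambda$ is only \emph{locally} projective: the global Hom functor is $\Hom_\Lambda(\Lambda,-)\simeq H^0(\P^d,-)$, which is not exact, and its derived functors are the sheaf cohomology groups $H^i(\P^d,-)$, which certainly do not all vanish (already for $\Lambda=\O$ one has $H^d(\P^d,\O(-d-1))\neq0$). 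If your claim were true, the Proposition would hold trivially for any twists whatsoever and Beilinson's theorem would be irrelevant---a sure sign something is wrong. The correct statement, which you in effect write down two sentences later via the local-to-global spectral sequence, is $\Ext^i_\Lambda(\Lambda e,M)\simeq H^i(\P^d,eM)$ for all $i\ge0$ (Lemma \ref{cohomology} of the paper): local projectivity of $P$ kills the higher local Ext sheaves, but the global cohomology of $eM$ remains. You must delete the projectivity claim; the two halves of your proposal cannot both be true.

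The remaining gap is that you never carry out the ``bookkeeping step'' you correctly identify as the heart of the matter: you only state that one ``must verify'' that the degrees occurring in $e(P(\y-\x))$ lie in $[-d,d]$. That verification is short but is the actual content. Writing $\y-\x=\sum_{i=1}^n\ell_i\x_i+\ell\c$ in normal form with $0\le\ell_i<p_i$ (possible by the relations $p_i\x_i=\c$ in $\L$), the hypothesis $\x,\y\in[0,d\c]$ forces $-d\le\ell\le d$; then Lemma \ref{first entry 2}(a), i.e.\ $e_i(P_i(m\x_i))\simeq\O(\lfloor m/p_i\rfloor)$, which follows from Observation \ref{obs:what J does} and \eqref{pixi}, gives $e_i(P_i(\ell_i\x_i))=\O$ for each $i$, whence $e(P(\y-\x))\simeq\O(\ell)$. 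Note this is a \emph{single} line bundle, not a direct sum as you suggest: $e=e_1\otimes\cdots\otimes e_n$ cuts out one matrix entry of $P(\y-\x)$. With that computation supplied and the projectivity claim removed, your argument coincides with the paper's.
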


First we need the following observation:

\begin{lemma}\label{cohomology}
For any idempotent $e$ of $H^0(\P^d, \Lambda)$, we have $\Ext^i_\Lambda(\Lambda e,X)\simeq H^i(\P^d,eX)$ for all $X\in\mod \Lambda$ and $i\ge0$.
\end{lemma}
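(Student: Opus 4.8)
The plan is to reduce the computation of the global $\Ext$ groups to ordinary sheaf cohomology on $\P^d$, exploiting that $\Lambda e$ is a direct summand of the free module $\Lambda$ and hence locally projective. The starting point is the sheaf-level identity
\[\mathcal{H}om_\Lambda(\Lambda e,X)\cong eX,\]
natural in $X$, where $eX:=\Image(X\xrightarrow{\,e\,}X)$ is the image of multiplication by the global idempotent $e$. Pointwise this is the familiar isomorphism $\Hom_{\Lambda_x}(\Lambda_xe,X_x)\cong eX_x$, $f\mapsto f(e)$; since $f(e)=f(e^2)=ef(e)$, evaluation at $e$ identifies local (and global) homomorphisms with sections of $eX$. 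Taking global sections already gives the case $i=0$, namely $\Hom_\Lambda(\Lambda e,X)\cong H^0(\P^d,eX)$.

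First I would observe that the functor $e(-)\colon\mod\Lambda\to\coh\P^d$, $X\mapsto eX$, is exact. Indeed, because $e$ is a global section of $\Lambda$ with $e^2=e$, multiplication by $e$ splits every $X$ as a direct sum $X=eX\oplus(1-e)X$ of sheaves of $\O$-modules, so $e(-)$ is a direct summand of the identity functor and is therefore exact. Consequently $\Hom_\Lambda(\Lambda e,-)$ factors as the composite $H^0(\P^d,-)\circ(e\,\cdot\,-)$ in which the inner functor is exact.

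Next I would derive this composite. Since $e(-)$ is exact, the Grothendieck composite-functor spectral sequence collapses and yields $\Ext^i_\Lambda(\Lambda e,X)\cong H^i(\P^d,eX)$, provided $e(-)$ carries injective $\Lambda$-modules to $H^0(\P^d,-)$-acyclic sheaves. Concretely, taking an injective resolution $X\to I^\bullet$ in $\Mod\Lambda$ and applying the exact functor $e(-)$ produces a resolution $eX\to eI^\bullet$ whose terms must be shown to be acyclic for global sections. This is where the one genuine input lies: an injective object $I$ of $\Mod\Lambda$ is flasque as a sheaf of abelian groups (the standard fact that injective modules over a sheaf of rings are flasque), and a direct summand of a flasque sheaf is again flasque, so $eI$ is flasque and hence has no higher cohomology. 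Computing $H^i(\P^d,eX)$ from the flasque resolution $eI^\bullet$ then gives the desired isomorphism.

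The only remaining point is to check that the $\Ext$ groups computed in the coherent category $\mod\Lambda$ agree with those computed in $\Mod\Lambda$; this is routine since $\P^d$ is Noetherian, $\Lambda$ is a coherent sheaf of algebras, and the first argument $\Lambda e$ is finitely presented. I expect the main obstacle to be the bookkeeping around injectives and acyclicity—ensuring the spectral sequence applies in the noncommutative sheaf setting and that flasqueness of injective $\Lambda$-modules transfers to $eI$—rather than any deep geometric difficulty; once the exactness of $e(-)$ and the flasqueness input are in place, the conclusion is immediate.
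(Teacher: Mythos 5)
Your proposal is correct and takes essentially the same route as the paper: the paper identifies the functors at $i=0$ via $\Hom_\Lambda(\Lambda e,-)\simeq e\Hom_\Lambda(\Lambda,-)\simeq H^0(\P^d,e-)$ and then simply asserts that "both sides are the right derived functors." Your steps—exactness of $e(-)$ from the splitting $X=eX\oplus(1-e)X$, and acyclicity of $eI$ for injective $I$ via flasqueness—are precisely the details that justify that one-line assertion, so you have given a fleshed-out version of the paper's own argument.
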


\begin{proof}
We only have to show the case $i=0$ since both sides are the 
right derived functors.
The case $i=0$ follows from the following isomorphism of $k$-vector spaces.
\[\Hom_\Lambda(\Lambda e,-)\simeq e\Hom_\Lambda(\Lambda,-)\simeq eH^0(\P^d, -)\simeq H^0(\P^d, e-),\]
where the middle equality follows from a natural isomorphism $\Hom_\Lambda(\Lambda,-)\simeq H^0(\P^d, -)$.
\end{proof}

Next we show the following:

\begin{lemma}\label{first entry 2}
\begin{itemize}
\item[(a)] Let $\ell\in\Z$. Then $e_i(P_i(\ell\x_i))\simeq\O(\lfloor\ell/p_i\rfloor)$ where 
	$\lfloor x\rfloor$ is the largest integer $a$ satisfying $a\leq x$.

\item[(b)] For any $\x,\y\in[0,d\c]$, we have $e(P(\y-\x))\simeq\O(\ell)$ for some $\ell$ with $-d\le \ell\le d$.
\end{itemize}
\end{lemma}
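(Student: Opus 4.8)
The plan is to prove (a) by a direct matrix computation and then bootstrap (b) from (a), using the multiplicativity of the whole construction together with a combinatorial analysis of the poset $[0,d\c]$ inside $\L$.

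For (a), I would read off the top-left entry directly from Observation~\ref{obs:what J does}: the module $P_i(\ell\x_i)=J_i^{\ell}\otimes_{\Lambda_i}P_i$ is the $(1-\ell)$-th column (modulo $p_i$) of $\Lambda_i\otimes_\O\O(\lceil \ell/p_i\rceil L_i)$, so applying $e_i$ extracts the entry in its first row. Since the first row of $\Lambda_i={\rm T}_{p_i}(\O,\O(-L_i))$ is $(\O,\O(-L_i),\dots,\O(-L_i))$, the relevant entry is $\O(\lceil \ell/p_i\rceil L_i)$ when the column index is $1$ and $\O((\lceil \ell/p_i\rceil-1)L_i)$ otherwise. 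Splitting into the cases $p_i\mid\ell$ (column index $1$, where $\lceil\ell/p_i\rceil=\lfloor\ell/p_i\rfloor$) and $p_i\nmid\ell$ (column index $\ge2$, where $\lceil\ell/p_i\rceil-1=\lfloor\ell/p_i\rfloor$), and using $\O(L_i)\simeq\O(1)$, gives $\O(\lfloor\ell/p_i\rfloor)$ in both cases.

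For (b), I would first reduce the computation of $e(P(\vec w))$ for an arbitrary $\vec w\in\L$ to part (a). Writing $\vec w=\sum_i a_i\x_i+b\c$ and using that $(\x_i)=I_i\otimes_\Lambda-$ acts only on the $i$-th tensor factor while $\c$ acts as $-\otimes_\O\O(1)$ by \eqref{pixi}, the module $P(\vec w)$ decomposes as $\big(\bigotimes_i P_i(a_i\x_i)\big)\otimes_\O\O(b)$; applying $e=e_1\otimes\cdots\otimes e_n$ factorwise and invoking (a) yields $e(P(\vec w))\simeq\O\big(\sum_i\lfloor a_i/p_i\rfloor+b\big)$. Choosing the normal form $0\le a_i<p_i$ (which exists and is unique, since the kernel of $\Z^{n+1}\to\L$ is spanned by the relations $p_i\x_i-\c$, so a multiple of $p_i$ of absolute value $<p_i$ must vanish) collapses this to $\O(b)$; hence the twist equals the $\c$-coefficient $b(\vec w)$ of the normal form of $\vec w$.

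The heart of the matter, and the main obstacle, is to control $b(\vec y-\vec x)$ for $\vec x,\vec y\in[0,d\c]$. The key combinatorial fact I would establish is $\L_+=\{\vec w\mid b(\vec w)\ge0\}$: reducing a nonnegative combination $\sum c_i\x_i$ modulo the $p_i$ produces $b=\sum_i\lfloor c_i/p_i\rfloor\ge0$, and conversely $b\ge0$ lets one absorb $b\c=bp_1\x_1$ into nonnegative $\x_i$-coefficients. Together with the normal form, this translates the conditions $\vec x\ge0$ and $d\c-\vec x\ge0$ into $0\le b(\vec x)\le d-s(\vec x)$, where $s(\vec x)$ counts the nonzero $\x_i$-coefficients of $\vec x$ (one computes $b(d\c-\vec x)=d-b(\vec x)-s(\vec x)$ by rewriting each $-a_i\x_i=(p_i-a_i)\x_i-\c$). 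Finally, comparing the normal forms of $\vec x$ and $\vec y$ gives $b(\vec y-\vec x)=b(\vec y)-b(\vec x)-t$, where $t=|\{i\mid a'_i<a_i\}|$ and, crucially, $t\le s(\vec x)$ since $a'_i<a_i$ forces $a_i\neq0$. The upper bound $b(\vec y-\vec x)\le b(\vec y)\le d$ is then immediate, while for the lower bound $b(\vec y-\vec x)\ge -b(\vec x)-t\ge-(d-s(\vec x))-t\ge-d$ I use exactly $t\le s(\vec x)$. Thus $\ell=b(\vec y-\vec x)\in[-d,d]$. The delicate point is keeping the book-keeping quantities $s$ and $t$ aligned so that the defects introduced by the floors cancel against the constraint $d\c-\vec x\ge0$; a naive degree count via $\delta\colon\L\to\mathbb{Q}$, $\x_i\mapsto1/p_i$, $\c\mapsto1$, only recovers the upper bound and misses the lower one.
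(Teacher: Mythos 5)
Your proposal is correct and takes essentially the same route as the paper: (a) is read off from Observation \ref{obs:what J does}, and (b) reduces $\y-\x$ to its normal form $\sum_{i}\ell_i\x_i+\ell\c$ with $0\le\ell_i<p_i$ and then applies (a) factorwise together with \eqref{pixi}. The only difference is that the paper asserts the key bound $-d\le\ell\le d$ without justification, whereas your combinatorial analysis (via $\L_+=\{\vec w\mid b(\vec w)\ge0\}$, the identity $b(d\c-\x)=d-b(\x)-s(\x)$, and the inequality $t\le s(\x)$) correctly proves exactly that step.
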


Notice that $e_i(P_i(\x_i))$ can not be written as $(e_iP_i)(\x_i)$ since $(\x_i)$ is defined for $\Lambda$-modules, not for $\O$-modules.

\begin{proof}
	(a) Follows from Observation \ref{obs:what J does} and \eqref{pixi}.


(b) By our definition of the group $\L$, we can write
\[\y-\x=\sum_{i=1}^n\ell_i\x_i+\ell\c\]
for $0\le\ell_i<p_i$ and $-d\le \ell\le d$.
By (a), we have $e_i(P_i(\ell_i\x_i))=\O$. 
Furthermore, using (\ref{pixi}), we see that $e_i(P_i(\ell\c))=\O(\ell)$. Thus
\[e(P(\y-\x))=(e_1(P_1(\ell_1\x_1))\otimes_{\O}\cdots\otimes_{\O}e_n(P_n(\ell_n\x_n))(\ell)=\O(\ell).\qedhere\]
\end{proof}

Now we are ready to prove Proposition \ref{ext vanishing}.

Let $\x,\y\in[0,d\c]$.
Since $(\x)$ is an autofunctor of $\mod \Lambda$, we have
\[\Ext^i_\Lambda(P(\x),P(\y))=\Ext^i_\Lambda(\Lambda e,P(\y-\x)).\]
By Lemmas \ref{cohomology} and \ref{first entry 2}, we have
\[\Ext^i_\Lambda(\Lambda e,P(\y-\x))=H^i(\P^d, e(P(\y-\x)))=H^i(\P^d, \O(\ell))\]
for some $\ell$ with $-d\le \ell\le d$. This is zero by Theorem \ref{beilinson} and so the proof is completed.
\qed

\subsection{Proof of the generation condition}

The proof is broken up into two parts: first we will prove a seemingly weaker generation condition,
and then show that in our case it is in fact sufficient.

\begin{proposition}\label{generate}
If $X\in\mod \Lambda$ satisfies $\Ext^i_\Lambda(T,X)=0$ for all $i\ge0$, then $X=0$.
\end{proposition}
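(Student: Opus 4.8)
**

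The plan is to reduce the vanishing statement $\Ext^i_\Lambda(T,X)=0$ for all $i\ge 0$ down to a cohomological vanishing on $\P^d$, and then invoke the last consequence of Beilinson's theorem — namely that if $H^i(\P^d, Y(-\ell))=0$ for all $i\ge 0$ and all $0\le\ell\le d$ then $Y=0$. First I would use Lemma \ref{cohomology} together with the $\L$-action to rewrite the hypothesis. Since $P(\x)=(\Lambda e)(\x)$ and $(\x)$ is an autofunctor, we have $\Ext^i_\Lambda(P(\x),X)\simeq\Ext^i_\Lambda(\Lambda e, X(-\x))\simeq H^i(\P^d, e(X(-\x)))$. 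Thus the hypothesis $\Ext^i_\Lambda(T,X)=0$ for all $i\ge0$ translates into $H^i(\P^d, e(X(-\x)))=0$ for all $i\ge 0$ and all $\x\in[0,d\c]$.

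\medskip

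The heart of the argument is to pass from the vanishing of these "first-entry" cohomology groups to the vanishing of the cohomology of all the components of $X$ as an $\O$-module. Here I would exploit the explicit combinatorics of $[0,d\c]$. Since $P=\Lambda e$ is an idempotent-cut summand and $\Lambda$ is built as a tensor product $\Lambda_1\otimes_\O\cdots\otimes_\O\Lambda_n$ of the triangular orders $\mathrm{T}_{p_i}(\O,\O(-L_i))$, the functor $e(-)$ extracts a single matrix entry. Applying the twists $(\x)$ for $\x$ ranging over $[0,d\c]$ moves this extraction across the different columns of each $\Lambda_i$ (by Observation \ref{obs:what J does}) while also twisting by $\O(\ell)$ for $0\le\ell\le d$ (by \eqref{pixi}). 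The key point is that as $\x$ runs over the whole poset interval $[0,d\c]$, the objects $e(X(-\x))$ sweep out every component of $X$, each twisted by every $\O(-\ell)$ with $0\le\ell\le d$. Concretely, $X$ decomposes componentwise according to the matrix positions of $\Lambda=\Lambda_1\otimes\cdots\otimes\Lambda_n$, and I would check that the indexing set $\{\x\in[0,d\c]\}$ surjects onto (all matrix-components) $\times$ (all twists $0,\dots,d$) so that the hypothesis yields $H^i(\P^d, X_{\text{comp}}(-\ell))=0$ for every component $X_{\text{comp}}$, every $i\ge0$, and every $0\le\ell\le d$.

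\medskip

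Once that is established, Beilinson's consequence applies componentwise: each component $X_{\text{comp}}$ is a coherent sheaf on $\P^d$ all of whose relevant twisted cohomologies vanish, hence $X_{\text{comp}}=0$. Since $X$ as an $\O$-module is the direct sum of its components, this forces $X=0$, completing the proof.

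\medskip

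I expect the main obstacle to be the bookkeeping in the middle paragraph: verifying precisely that the twists indexed by $[0,d\c]$ hit every matrix-component of $X$ in combination with every $\O(-\ell)$ for $0\le\ell\le d$. This requires unwinding the definition of $\L_+$ and the partial order, and carefully tracking how $(\x_i)$ permutes columns modulo $p_i$ (via Observation \ref{obs:what J does}) versus how the $\c$-direction produces genuine Serre twists (via \eqref{pixi}). The decomposition $\y-\x=\sum_i\ell_i\x_i+\ell\c$ from the proof of Lemma \ref{first entry 2}(b), with $0\le\ell_i<p_i$ and $0\le\ell\le d$, is exactly the combinatorial device that makes the components-times-twists count work out, so I would lean on that normal form to organize the argument.
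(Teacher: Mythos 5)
Your opening reduction is correct and agrees with the paper: via Lemma \ref{cohomology} and the autoequivalences $(\x)$, the hypothesis becomes $H^i(\P^d,e(X(-\x)))=0$ for all $i\ge0$ and all $\x\in[0,d\c]$. However, the key claim in your middle paragraph --- that $[0,d\c]$ surjects onto (all matrix components of $X$) $\times$ (all twists $0,\dots,d$) --- is false, and this is exactly where the proof breaks. Write $\x\in\L$ in normal form $\x=\sum_i a_i\x_i+\ell\c$ with $0\le a_i<p_i$, and let $m=|\{i\mid a_i\neq0\}|$. Then $\x\ge0$ forces $\ell\ge0$, while $d\c-\x=\sum_{a_i\neq0}(p_i-a_i)\x_i+(d-\ell-m)\c$ shows that $\x\le d\c$ forces $\ell+m\le d$. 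So the component $e'X$ indexed by $(a_1,\dots,a_n)$ is paired in $[0,d\c]$ only with the twists $0\le\ell\le d-m$, not with all of $0,\dots,d$; and if $m>d$ that component does not appear at all. Already for $d=1$, $n=1$, $p_1=2$ one has $[0,\c]=\{0,\x_1,\c\}$: the component cut out by the lower diagonal idempotent occurs only with twist $0$, so Beilinson's criterion on $\P^1$ (which needs both twists $0$ and $1$) cannot be applied to it. The normal form you invoke from Lemma \ref{first entry 2}(b), with $-d\le\ell\le d$, describes differences $\y-\x$ of two elements of $[0,d\c]$ and does not repair this count. Thus the available vanishing is genuinely insufficient for a componentwise application of Beilinson on $\P^d$.

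The missing idea is geometric, and it is why the result needs Assumption \ref{general position} at all --- your argument never uses it, which is a warning sign. The paper argues by induction on $N=|\{i\mid e'_i\neq e_i\}|$: after establishing $eX=0$ (Lemma \ref{eX=0}, the case $N=0$, where all twists $0,\dots,d$ are indeed available), Lemma \ref{killebyideal} shows that $e'X$ is annihilated by $\O(-L_i)$ for each $i$ with $e'_i\neq e_i$, hence $e'X$ is a sheaf supported on $\bigcap_{e'_i\neq e_i}L_i$, which by general position is a copy of $\P^{d-N}$ (or is empty when $N>d$, killing precisely the components that $[0,d\c]$ misses entirely). Beilinson's criterion on $\P^{d-N}$ requires only the twists $0\le\ell\le d-N$, and these are exactly the ones the interval $[0,d\c]$ provides for such a component. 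In other words, the shortfall in twists is compensated exactly by the drop in dimension of the support; without this support-restriction step the approach you outline cannot close.
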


\begin{proof}
We call an idempotent $e'_i\in H^0(\P^d, \Lambda_i)$ \emph{standard} if it has
one entry $1$ on the diagonal and all other entries are $0$.

Assume $X\in\mod \Lambda$ satisfies $\Ext^i_\Lambda(T,X)=0$ for all $i\ge0$.
We need to show that $e'X=0$ for all idempotents $e'\in H^0(\P^d, \Lambda)$ of the form
\[e'=e'_1\otimes\cdots\otimes e'_n\]
for standard idempotents $e'_i\in H^0(\P^d, \Lambda_i)$.
We prove $e'X=0$ by showing $H^i(\P^d, e'X(-\ell))=0$ for sufficiently many $\ell$ (depending on the support of $e'X$) and then invoke Theorem \ref{beilinson}. We proceed 
by using the induction with respect to $N:=|\{i\mid 1\le i\le n,\ e'_i\neq e_i\}|$.
First we show when case $N=0$ (i.e. $e'=e$):

\begin{lemma}\label{eX=0}
We have $eX=0$.
\end{lemma}

\begin{proof}
For each $\ell$ with $0\le \ell\le d$, we have $\ell\c\in[0,d\c]$.
By Lemma \ref{cohomology} and our assumption we have
\begin{equation}
H^i(\P^d, eX(-\ell))=\Ext^i_\Lambda(P(\ell\c),X)\subset\Ext^i_\Lambda(T,X)=0
\end{equation}
for all $i\ge0$. By Theorem \ref{beilinson}, we have $eX=0$.
\end{proof}

The case $N=1$ follows from the following lemma:
\begin{lemma}\label{killebyideal}
Assume that $e_i\neq e'_i$ and $e_j=e'_j$ for any $j\neq i$.
\begin{itemize}
\item[(a)] The $\O$-module $e'X$ is annihilated by $\O(-L_i)$.
\item[(b)] We have $e'X=0$.
\end{itemize}
\end{lemma}

\begin{proof}
(a) Under the natural identification $e'\Lambda e'=\O$, we have $e'\Lambda e\Lambda e'=\O(-L_i)$.
We thus have by Lemma \ref{eX=0}
\[\O(-L_i)e'X=(e'\Lambda e\Lambda e')(e'X)=(e'\Lambda e)(\Lambda e'X)\subset e'\Lambda eX=0.\]

(b) By (a), we can regard $e'X$ as a sheaf on $L_i\cong\P^{d-1}$.

For each $\ell$ with $0\le \ell<d$, and $0<m<p_1$ we have 
$m\x_1+\ell\c\in[0,d\c]$.
Since $\Lambda e'=P(m\x_1)$, it follows from Lemma \ref{cohomology} that
\begin{equation}
	H^i(\P^d,e'X(-\ell))=\Ext^i_\Lambda(\Lambda e'(\ell\c),X)=\Ext^i_\Lambda(P(m\x_1+\ell\c),X)\subset\Ext^i_\Lambda(T,X)=0
\end{equation}
for all $i\geq0$.
Thus we have
\[H^i(\P^{d-1},e'X(-\ell))=H^i(\P^d,e'X(-\ell))=0\]
for all $\ell$ with $0\le \ell<d$. By Theorem \ref{beilinson} (replace $d$ there by $d-1$), we have $e'X=0$.
\end{proof}

The case $N\ge2$ can be shown similarly:

By Assumption \ref{general position} and a similar argument as in the proof of Lemma \ref{killebyideal}(a),
if $d-N\geq 0$ we can regard $e'X$ as a
sheaf on $\bigcap_{e'_i\neq e_i}L_i \cong \P^{d-N}$ and if $d-N<0$ it follows that $e'X=0$.

On the other hand, by $\Ext^i_\Lambda(T,X)=0$ for all $i\ge0$, we have
\[H^i(\P^d, e'X(-k))=H^i(\P^{d-N}, e'X(-k))=0\]
for all $i\ge0$ and all $k$ with $0\le k\le d-N$.
By Theorem \ref{beilinson} (replace $d$ there by $d-N$) we have $e'X=0$.
\end{proof}

We will now show that Proposition \ref{generate} implies the generation condition.
To do this, we first show that $\Lambda$ has global dimension $d$.
\begin{lemma}\label{gldimmat}
Let $(R,{\mathfrak m})$ be a regular local ring of dimension $d$ and
$(a_1,\ldots,a_\ell)$ a subset of a minimal set of generators of $\mathfrak{m}$. Then the ring
\[B:={\rm T}_{p_1}(R,(a_1))\otimes_R\cdots\otimes_R{\rm T}_{p_\ell}(R,(a_\ell))\]
has global dimension $d$.
\end{lemma}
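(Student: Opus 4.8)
The plan is to combine an induction establishing \emph{finiteness} of the global dimension with a general fact about orders over regular local rings that pins down the exact value. Two features of $B$ drive everything: $R$ lies in the centre of $B$, and $B$ is free as an $R$-module, since every matrix entry is either $R$ or some $(a_i)\cong R$. In particular $B$ is a maximal Cohen--Macaulay $R$-module with $\depth_R B=d$. The lower bound $\gl B\ge d$ is then immediate: as $B$ is $R$-free, any $B$-projective resolution of $B\otimes_R k$ is simultaneously an $R$-projective resolution, so $\pd_B(B\otimes_R k)\ge\pd_R(B\otimes_R k)=\pd_R k=d$.

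For finiteness I would induct on the number $\ell$ of tensor factors, with the hypothesis stated for an arbitrary regular local base and regular sequence. The base case $\ell=0$ is $B=R$, and if $p_1=1$ the first factor is $R$ and we drop a factor. So assume $p_1\ge2$, write $B=C_1\otimes_R D$ with $C_1={\rm T}_{p_1}(R,(a_1))$ and $D=\bigotimes_{i\ge2}{\rm T}_{p_i}(R,(a_i))$, and let $\varepsilon:=e_{11}\otimes 1_D$ be the top-left corner idempotent of the first factor. A direct matrix computation gives $\varepsilon B\varepsilon\cong D$ and shows that $B\varepsilon B$ is a stratifying ideal, i.e. $B\varepsilon\otimes_D\varepsilon B\xrightarrow{\ \sim\ }B\varepsilon B$ with $\Tor^D_{>0}(B\varepsilon,\varepsilon B)=0$ (both sides are $R$-free of equal rank and the multiplication map is onto). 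Moreover one checks
\[B/B\varepsilon B\;\cong\;{\rm T}_{p_1-1}(R/(a_1),0)\otimes_{R/(a_1)}D'\;\cong\;{\rm T}_{p_1-1}(D',0),\qquad D':=\textstyle\bigotimes_{i\ge2}{\rm T}_{p_i}(R/(a_1),(\overline{a_i})).\]
Here the regular sequence hypothesis enters twice: $R/(a_1)$ is again regular local, now of dimension $d-1$, and $(\overline{a_2},\dots,\overline{a_\ell})$ is a regular sequence in it, so that each ${\rm T}_{p_i}(R,(a_i))\otimes_R R/(a_1)$ has again the correct shape. By the inductive hypothesis both $\varepsilon B\varepsilon=D$ (over $R$, with $\ell-1$ factors) and $D'$ (over $R/(a_1)$, with $\ell-1$ factors) have finite global dimension; hence so does the triangular matrix ring $B/B\varepsilon B$ by the standard bound $\gl{\rm T}_m(S,0)\le\gl S+1$. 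The stratifying recollement attached to $\varepsilon$ then forces $\gl B<\infty$.

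Finally I would upgrade finiteness to the exact value by invoking the relative Auslander--Buchsbaum formula for orders: since $B$ is $R$-free over the regular local ring $R$ of dimension $d$ and $\gl B<\infty$, every finitely generated $B$-module $M$ satisfies $\pd_B M+\depth_R M=\depth_R B=d$. Applying this to $M=B/\operatorname{rad}B$, which is annihilated by $\mathfrak m$ (as $\mathfrak m B\subseteq\operatorname{rad}B$) and hence has $\depth_R M=0$, gives $\pd_B M=d$, so $\gl B=d$. The hard part is precisely this last step. The recollement in the induction yields only crude bounds — already for a hereditary order over a discrete valuation ring the naive estimate $\gl(B/B\varepsilon B)+1$ overshoots — so finiteness alone does not deliver the sharp value $d$; it is the Cohen--Macaulayness of $B$ over the regular base that rigidifies the answer through the Auslander--Buchsbaum equality. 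The remaining points, namely verifying that $B\varepsilon B$ is stratifying and identifying the corner and quotient rings, are routine matrix computations.
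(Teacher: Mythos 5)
Your proposal is correct, but it takes a genuinely different route from the paper, which gets the exact value $d$ in one stroke rather than in your two steps (finiteness, then sharpening). The paper reduces to showing $\pd_B S=d$ for every simple $B$-module $S$, moves $S$ into the top-left corner by an automorphism of $B$, and splices two explicit resolutions: the Koszul resolution of $R/\mathfrak{m}$ over $R/(a_1,\ldots,a_\ell)$, of length $d-\ell$ and with terms in $\add M$ for the corner module $M$ with entry $R/(a_1,\ldots,a_\ell)$, and the length-$\ell$ projective resolution of $M$ obtained by tensoring the two-term column inclusions of the factors ${\rm T}_{p_i}(R,(a_i))$. This exhibits the (minimal) resolution of each simple explicitly, with no external machinery. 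Your argument instead proves finiteness by induction via the stratifying ideal $B\varepsilon B$ and then pins down the value with the Auslander--Buchsbaum formula for $R$-orders; your corner identifications ($\varepsilon B\varepsilon\cong D$, $B\varepsilon\otimes_D\varepsilon B\cong B\varepsilon B$, $B/B\varepsilon B\cong{\rm T}_{p_1-1}(D',0)$) are all correct, and the A--B formula is a legitimate known result (it is essentially the kind of tool from [IW] that the paper itself invokes elsewhere). One point of care: since $B$ is not a finite-dimensional algebra, "a stratifying recollement transfers finiteness of global dimension from the outer terms to the middle" should not be cited as a black box in this Noetherian setting. What makes it work here — and what your matrix computation in fact shows, so it should be said — is that $\varepsilon B$ is free as a left $D$-module and $B\varepsilon$ is free as a right $D$-module; hence $B\varepsilon B\cong(B\varepsilon)^{p_1}$ is projective as a left $B$-module, so $\pd_B(B/B\varepsilon B)\le 1$, and the triangle $B\varepsilon\otimes_D\varepsilon M\to M\to C$ together with elementary change-of-rings estimates finishes finiteness without any general recollement theorem. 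What your route buys is conceptual robustness (A--B explains why the answer is exactly $d$ for any $R$-free order of finite global dimension, where crude recollement bounds overshoot, as you note); what the paper's route buys is brevity and self-containment.

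A caveat shared by both proofs: the hypothesis "regular sequence" is literally insufficient, and both arguments silently strengthen it. You assert that $R/(a_1)$ is regular local of dimension $d-1$, and the paper asserts $\pd_{R/(a_1,\ldots,a_\ell)}(R/\mathfrak{m})=d-\ell$; each of these holds only when $a_1,\ldots,a_\ell$ form part of a regular system of parameters, not for an arbitrary regular sequence. The distinction matters for the statement itself: for $R=k[[x]]$ and $a_1=x^2$ (a regular sequence), the ring ${\rm T}_2(R,(x^2))$ has infinite global dimension, since the radical of its first column module is its own syzygy. So the lemma needs this stronger reading of the hypothesis; it is harmless in the paper's application, where the $a_i$ are local equations of hyperplanes in general position, but your phrase "the regular sequence hypothesis enters twice" credits that hypothesis with something it does not actually provide.
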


\begin{proof}
It is enough to show that any simple $B$-module $S$ has projective dimension $d$.
Up to an automorphism of $B$, we can assume that
\[S=\left[\begin{array}{c}
R/{\mathfrak m}\\ 0\\ \vdots\\ 0\\ 0
\end{array}\right]\]
where we regard $B$ as a subring of $M_{p_1\cdots p_\ell}(R)$.

Let
\[M:=\left[\begin{array}{c}
R/(a_1,\ldots,a_\ell)\\ 0\\ \vdots\\ 0\\ 0
\end{array}\right]\simeq
\bigotimes_{i=1}^\ell\left[\begin{array}{c}
R/(a_i)\\ 0\\ \vdots\\ 0\\ 0
\end{array}\right]\in\mod B.\]
Since $R/{\mathfrak m}$ is an $R/(a_1,\ldots,a_\ell)$-module with
projective dimension $d-\ell$, we have an exact sequence
\[0\to M_{d-\ell}\to\cdots\to M_0\to S\to0\]
of $B$-modules with $M_i\in\add M$.
On the other hand, the $B$-module $M$ has projective dimension $\ell$
since it has a projective resolution
\[\bigotimes_{i=1}^\ell\left(
\left[\begin{array}{c}
(a_i)\\ R\\ \vdots\\ R\\ R
\end{array}\right]\to
\left[\begin{array}{c}
R\\ R\\ \vdots\\ R\\ R
\end{array}\right]
\right).\]
Thus the $B$-module $S$ has projective dimension $d$.
\end{proof}

\begin{proposition}\label{global dimension}
\begin{itemize}
\item[(a)] The order $\Lambda_x$ has global dimension $d$ for all closed points $x\in\P^d$.
\item[(b)] The order $\Lambda$ has global dimension $d$.
\end{itemize}
\end{proposition}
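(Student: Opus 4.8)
The plan is to prove Proposition \ref{global dimension} by reducing the global dimension computation to the local, commutative situation handled in Lemma \ref{gldimmat}. Global dimension of an order (a coherent sheaf of algebras on $\P^d$) can be checked stalk-wise, so part (b) will follow once we establish the bound on each local order $\Lambda_x$ together with the standard fact that $\gl\Lambda=\sup_{x}\gl\Lambda_x$. Thus the real content is part (a), and part (b) is essentially a formality plus the observation that the generic stalk already forces the global dimension to be at least $d$ (it cannot drop below $d$ because on the open locus where no hyperplanes pass, $\Lambda_x$ is Morita-equivalent to the regular local ring $\O_{\P^d,x}$ of dimension $d$).

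For part (a), first I would fix a closed point $x\in\P^d$ and pass to the stalk. Writing $R:=\O_{\P^d,x}$, which is a regular local ring of dimension $d$, the key step is to identify $\Lambda_x$ concretely. By Assumption \ref{general position} the hyperplanes are in general position, so for each $i$ the local equation of $L_i$ at $x$ is either a unit (when $x\notin L_i$) or a regular element $a_i\in R$, and the collection of those $a_i$ for which $x\in L_i$ forms part of a regular system of parameters — hence a regular sequence — precisely because general position means the relevant intersection has the expected codimension. When $x\notin L_i$ the ideal sheaf $\O(-L_i)$ is locally trivial, so the tensor factor ${\rm T}_{p_i}(\O,\O(-L_i))_x\cong{\rm T}_{p_i}(R,R)$ is Morita-equivalent to $R$ and contributes nothing to the global dimension; only the factors with $x\in L_i$ survive nontrivially. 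After discarding the trivial factors we obtain
\[\Lambda_x\ \simeq\ {\rm T}_{p_{i_1}}(R,(a_{i_1}))\otimes_R\cdots\otimes_R{\rm T}_{p_{i_\ell}}(R,(a_{i_\ell}))\]
up to Morita equivalence, where $\{i_1,\dots,i_\ell\}=\{i\mid x\in L_i\}$ and $(a_{i_1},\dots,a_{i_\ell})$ is a regular sequence in $R$. This is exactly the ring $B$ of Lemma \ref{gldimmat}, whence $\gl\Lambda_x=d$.

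The main obstacle I anticipate is the careful handling of the Morita reduction: one must check that tensoring over $\O$ (equivalently over $R$ after localizing) the trivial factors ${\rm T}_{p_i}(R,R)=M_{p_i}(R)$-like blocks does not disturb the global dimension, and that the identification of $\Lambda_x$ with $B$ really respects the regular-sequence hypothesis of Lemma \ref{gldimmat}. The subtle point is verifying that the local equations $a_{i_1},\dots,a_{i_\ell}$ genuinely form a regular sequence and not merely a set of regular elements: this is where Assumption \ref{general position} is indispensable, since the codimension-$\ell$ condition on $\bigcap_{k}L_{i_k}$ at $x$ guarantees that $R/(a_{i_1},\dots,a_{i_\ell})$ is again regular of dimension $d-\ell$, so that the $a_{i_k}$ are part of a regular system of parameters. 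Once the reduction is in place, Lemma \ref{gldimmat} delivers $\gl\Lambda_x=d$ for every closed point, and part (b) follows by taking the supremum over all $x$ together with the lower bound coming from any point lying on none of the hyperplanes.
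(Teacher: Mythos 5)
Your part (a) is essentially the paper's own argument and is fine: localize at a closed point $x$, note that the factors with $x\notin L_i$ become full matrix algebras over $\O_x$ and can be discarded up to Morita equivalence, use Assumption \ref{general position} to see that the local equations of those $L_i$ containing $x$ form a regular sequence in $\O_x$, and apply Lemma \ref{gldimmat}.

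Part (b), however, has a genuine gap. The ``standard fact'' you invoke, $\gl\Lambda=\sup_{x}\gl\Lambda_x$ with the supremum over closed points, is not a formality, and the reasoning offered for it (``global dimension of a sheaf of algebras can be checked stalk-wise'') is simply false over a projective base: Ext-groups in $\mod\Lambda$ are not computed stalk-wise, but via the local-to-global spectral sequence $H^p(\P^d,{\mathscr Ext}^q_\Lambda(X,Y))\Rightarrow\Ext^{p+q}_\Lambda(X,Y)$ (\cite[Theorem 1(2)]{BD}). For example $\O_{\P^1}$ has stalk-wise projective dimension $0$, yet $\Ext^1_{\P^1}(\O,\O(-2))=H^1(\P^1,\O(-2))\neq0$; stalk-wise data at closed points alone never controls global Ext. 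Concretely, part (a) together with exactness of central localization gives only $\gl\Lambda_x\le d$ at \emph{every} point (closed or not), and feeding this into the spectral sequence yields merely $\Ext^i_\Lambda(X,Y)=0$ for $i>2d$, not for $i>d$. The paper's proof of (b) consists precisely of the three steps needed to close this gap: (i) at a non-closed point $x$ the stalk has the \emph{strictly smaller} global dimension $\gl\Lambda_x=\dim\O_x=d-\dim x$, which is deduced from (a) at a closed specialization $y$ of $x$ via \cite[Theorem 2.17]{IW} --- a nontrivial input about orders, not a consequence of localization alone; (ii) hence the support of ${\mathscr Ext}^q_\Lambda(X,Y)$ has dimension at most $d-q$; (iii) therefore, by Grothendieck vanishing, $H^p(\P^d,{\mathscr Ext}^q_\Lambda(X,Y))=0$ whenever $p+q>d$, and the spectral sequence gives $\Ext^i_\Lambda(X,Y)=0$ for $i>d$. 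None of these steps appears in your proposal; the refined bound $d-\dim x$ at non-closed points is exactly the missing idea. (Your lower-bound remark is fine in spirit, but it too needs an argument to pass from local to global: e.g.\ take a module supported at a single closed point, for which the spectral sequence degenerates, to transport $\Ext^d_{\Lambda_x}\neq0$ into $\Ext^d_\Lambda\neq0$.)
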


\begin{proof}
(a) Note that $\O_x:=\O_{\P^d,x}$ is a regular local ring of dimension $d$.
Let $I_x:=\{i\mid 1\le i\le n,\; x\in L_i\}$. Then we have
\[(\Lambda_i)_x=\left\{\begin{array}{cc}
{\rm T}_{p_i}(\O_x,(a_i))&\mbox{if $i\in I_x$,}\\
{\rm M}_{p_i}(\O_x)&\mbox{if $i\notin I_x$.}
\end{array}\right.\]
for $a_i\in \O_x$ defining $L_i$ locally. Our Assumption \ref{general position} implies that $(a_i)_{i\in I_x}$ is a subset of a minimal set of generators of the maximal ideal
$\mathfrak{m}_x$ of $\O_x$.
Thus $\Lambda_x$ is Morita-equivalent to
\[\bigotimes_{i\in I_x} {\rm T}_{p_i}(\O_x,(a_i)),\]
and the statement now follows from Lemma \ref{gldimmat}.

(b) For $X,Y\in\mod\Lambda$, we need show that $\Ext^i_\Lambda(X,Y)=0$ for all $i>d$.
We use an argument similar to \cite[Theorem 1(4)]{BD}.

We first prove that the support of the $\O_{\P^d}$-module
${\mathscr Ext}^q_\Lambda(X,Y)$ has dimension at most $d-q$.
We need to show that ${\mathscr Ext}^q_\Lambda(X,Y)_x=0$ holds
for any point $x\in\P^d$ with $\dim x>d-q$.
For a closed point $y$ on $x$, we know $\gldim\Lambda_y=d$ by (a).
Since $\Lambda_x=\Lambda_y\otimes_{\O_y}\O_x$, we have
\[\gldim \Lambda_x=\dim\O_x=d-\dim x\]
(e.g. apply \cite[Theorem 2.17(2)$\Rightarrow$(1)]{IW} with
$(R,\Lambda):=(\O_y,\Lambda_y)$ there).
Therefore ${\mathscr Ext}^q_\Lambda(X,Y)_x=\Ext^q_{\Lambda_x}(X_x,Y_x)=0$
holds if $q>d-\dim x$, and the assertion follows.

In particular, if $p+q>d$, then $H^p(\P^d,{\mathscr Ext}^q_\Lambda(X,Y))=0$ holds.
Using the local-global spectral sequence
$H^p(\P^d,{\mathscr Ext}^q_\Lambda(X,Y))\Rightarrow \Ext^{p+q}_\Lambda(X,Y)$
(e.g. \cite[Theorem 1(2)]{BD}), we have
$\Ext^i_\Lambda(X,Y)=0$ for all $i>d$.
\end{proof}

Now we are ready to prove that $T$ generates the category.

\begin{proposition}\label{generate 2}
\begin{itemize}
\item[(a)] $\DDD^{\bo}(\mod \Lambda)=\thick T$.
\item[(b)] We have a triangle equivalence $\DDD^{\bo}(\mod \Lambda)\simeq\DDD^{\bo}(\mod\End_\Lambda(T))$.
\end{itemize}
\end{proposition}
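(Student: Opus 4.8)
\begin{itemize}
\item[(a)] $\DDD^{\bo}(\mod \Lambda)=\thick T$.
\item[(b)] We have a triangle equivalence $\DDD^{\bo}(\mod \Lambda)\simeq\DDD^{\bo}(\mod\End_\Lambda(T))$.
\end{itemize}

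Let me plan a proof.

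For (a), I have already established in Proposition \ref{ext vanishing} that $T$ is rigid, and in Proposition \ref{generate} the key "orthogonality" statement: any $X \in \mod \Lambda$ with $\Ext^i_\Lambda(T,X)=0$ for all $i \geq 0$ must vanish. I also now know from Proposition \ref{global dimension} that $\Lambda$ has finite global dimension $d$, so $T$ is a genuine bundle of finite projective dimension and $\DDD^{\bo}(\mod \Lambda)$ has good homological behaviour. The strategy is the standard one for passing from "$T$ is a rigid generator in the weak (orthogonality) sense" to "$\thick T$ is everything."

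For (a), the plan is to show that $\thick T = \DDD^{\bo}(\mod\Lambda)$ by a right-orthogonal argument. Let $\UU := \thick T$, a thick subcategory of the triangulated category $\DDD := \DDD^{\bo}(\mod\Lambda)$. Because $\Lambda$ has finite global dimension, $\DDD$ is equivalent to $\KKK^{\bo}(\proj\Lambda)$ and is $\Hom$-finite, so I may form the right orthogonal $\UU^{\perp} := \{ Z \in \DDD \mid \Hom_{\DDD}(U,Z[j])=0 \text{ for all } U\in\UU,\ j\in\Z\}$.

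I claim $\UU^{\perp}=0$, which forces $\UU = \DDD$.

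Indeed, take $Z \in \UU^{\perp}$. Since $T\in\UU$, we have $\Hom_{\DDD}(T,Z[j])=0$ for all $j\in\Z$; equivalently $\Ext^j_\Lambda(T,Z)=0$ for all $j\in\Z$ when $Z$ is viewed as a complex. To reduce this to the module-level statement of Proposition \ref{generate}, I take the cohomologies $H^m(Z)\in\mod\Lambda$ and argue by descending induction on the (finite) length of the interval where $Z$ has nonzero cohomology, using the hypercohomology spectral sequence
\[
\Ext^p_\Lambda(T,H^q(Z)) \Rightarrow \Ext^{p+q}_\Lambda(T,Z).
\]
Since $T$ has finite projective dimension and the complex is bounded, looking at the top nonvanishing cohomology $H^{m_0}(Z)$ the spectral sequence degenerates in the appropriate corner and yields $\Ext^i_\Lambda(T,H^{m_0}(Z))=0$ for all $i\ge0$; Proposition \ref{generate} then gives $H^{m_0}(Z)=0$, contradicting maximality unless $Z=0$. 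Peeling off cohomologies one at a time this way shows $Z=0$, hence $\UU^{\perp}=0$ and $\thick T=\DDD^{\bo}(\mod\Lambda)$.

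For (b), this is now a formal consequence of (a) together with the rigidity from Proposition \ref{ext vanishing}. The object $T$ is a tilting object in $\DDD^{\bo}(\mod\Lambda)$: it is self-orthogonal (Proposition \ref{ext vanishing}), it generates (part (a)), and it is compact (it is a bundle, hence perfect, using $\gl\Lambda=d<\infty$). By the standard tilting theorem (Keller's Morita theory for derived categories, or the Rickard/Keller reconstruction), any compact tilting object in an algebraic triangulated category induces a triangle equivalence
\[
\DDD^{\bo}(\mod\Lambda)\simeq\DDD^{\bo}(\mod\End_\Lambda(T)),
\]
sending $T$ to the free module $\End_\Lambda(T)$. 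I would invoke this theorem directly.

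The main obstacle is the cohomological bookkeeping in (a): correctly running the spectral-sequence/induction argument to deduce the object-level vanishing $Z=0$ from the module-level Proposition \ref{generate}, while being careful that the finiteness of $\gl\Lambda$ is what makes $\UU^{\perp}$ well-behaved and the spectral sequence finite. Once that reduction is clean, both parts follow from results already in hand plus the standard tilting formalism.
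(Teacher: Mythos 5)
Your proof of (a) rests on the claim that, since $\UU:=\thick T$ is a thick subcategory of $\DDD:=\DDD^{\bo}(\mod\Lambda)$ with $\UU^{\perp}=0$, it is "forced" that $\UU=\DDD$. This implication is precisely the hard content of the proposition, and it is not a formal consequence of anything you cite: neither Hom-finiteness of $\DDD$ nor the identification $\DDD\simeq\KKK^{\bo}(\proj\Lambda)$ makes it true. Concretely, for $A=k[\epsilon]/(\epsilon^2)$ the thick subcategory $\UU=\per A=\thick A$ of the Hom-finite category $\DDD^{\bo}(\mod A)$ satisfies $\UU^{\perp}=0$ (because $\Hom_{\DDD^{\bo}(\mod A)}(A,Z[i])\simeq H^i(Z)$), yet $\UU\neq\DDD^{\bo}(\mod A)$; and smoothness/properness of the ambient category does not rescue the implication either, since for an elliptic curve $C$ the thick subcategory of $\DDD^{\bo}(\coh C)$ consisting of complexes whose cohomology sheaves are semistable of slope zero is proper but has zero right orthogonal. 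What is genuinely needed is an approximation statement: every $X\in\DDD$ must admit a triangle $Y\to X\to Z\to Y[1]$ with $Y\in\thick T$ and $\Hom_{\DDD}(U,Z)=0$ for all $U\in\thick T$; only then does the orthogonality of Proposition \ref{generate} kill $Z$ and give $X\simeq Y\in\thick T$. Supplying this triangle is exactly what the paper's proof does, and it is not free: the paper shows $\thick T$ is contravariantly finite by combining Keller's theorem ($\thick T\simeq\KKK^{\bo}(\proj E)$ for $E=\End_\Lambda(T)$), the finiteness of $\gl E$ to write $\thick T=\CC*\cdots*\CC$ for $\CC=\add\{T[i]\mid i\in\Z\}$ (Krause--Kussin), Chen's theorem that such finite products are contravariantly finite, and then Iyama--Yoshino to extract the approximation triangle. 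Your appeal to $\gl\Lambda<\infty$ does not substitute for this (it is $\gl E$, not $\gl\Lambda$, that drives the approximation property), and the same remark affects your part (b): Keller's theorem gives $\thick T\simeq\per E$, and upgrading $\per E$ to $\DDD^{\bo}(\mod E)$ again requires $\gl E<\infty$.

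There is also a flaw in the part of the argument you do carry out, namely the reduction of the vanishing $\Hom_{\DDD}(T,Z[j])=0$ to the module-level Proposition \ref{generate}. In the spectral sequence $E_2^{p,q}=\Ext^p_\Lambda(T,H^q(Z))\Rightarrow\Hom_{\DDD}(T,Z[p+q])$, the top row $q=m_0$ receives no differentials but \emph{supports} nonzero outgoing differentials into the lower rows, so vanishing of the abutment only yields $E_\infty^{p,m_0}=0$, not $E_2^{p,m_0}=0$; the only entries that survive unchanged to $E_\infty$ are the extreme corners $(0,q_{\min})$ and $(d,q_{\max})$, giving merely $\Hom_\Lambda(T,H^{q_{\min}}(Z))=0$ and $\Ext^d_\Lambda(T,H^{q_{\max}}(Z))=0$. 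So your claimed conclusion $\Ext^i_\Lambda(T,H^{m_0}(Z))=0$ for all $i\geq0$ does not follow, and the induction does not start. (To be fair, the paper also applies Proposition \ref{generate} to the cone $Z$, which is a complex, without comment; but in the paper this step is subordinate to the approximation triangle, which is the real engine of the proof and is entirely absent from your proposal.)
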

We prepare some notions. Let $\CC$ and $\CC'$ be additive subcategories of $\DDD^{\bo}(\mod \Lambda)$.

We call $\CC$ \emph{contravariantly finite} if for any $X\in\DDD^{\bo}(\mod \Lambda)$, there exists a morphism $f:C\to X$ with $C\in\CC$ such that
\[\Hom_{\DDD^{\bo}(\mod \Lambda)}(C',C)\xrightarrow{f}\Hom_{\DDD^{\bo}(\mod \Lambda)}(C',X)\]
is surjective for any $C'\in\CC$.

We define an additive subcategory $\CC*\CC'$ of $\DDD^{\bo}(\mod \Lambda)$ by
\[\CC*\CC':=\{X\in\DDD^{\bo}(\mod \Lambda)\mid {\rm there\;exists\;a\;triangle\;} C\to X\to C'\to C[1]\mbox{ with }\ C\in\CC,\ C'\in\CC'\}.\]
If $\CC$ and $\CC'$ are contravariantly finite, then so is $\CC*\CC'$ (see \cite[Theorem 1.3]{Ch}).

\begin{proof}[Proof of Proposition \ref{generate 2}]
(a) For any $X\in\DDD^{\bo}(\mod \Lambda)$, there exists only finitely many $i\in\Z$ such that
$\Hom_{\DDD^{\bo}(\mod \Lambda)}(T,X[i])\neq0$ since $\Lambda$ has finite global dimension by Proposition \ref{global dimension}.
Thus $\CC:=\add\{ T[i]\mid i\in\Z\}$ is a contravariantly finite subcategory of $\DDD^{\bo}(\mod \Lambda)$. 
In particular, $\CC*\cdots*\CC$ ($m$ times) is also contravariantly finite for all $m\ge0$.

Let $E:=\End_\Lambda(T)$.
Then $\thick T$ is triangle equivalent to $\KKK^{\bo}(\proj E)$ \cite{Ke}.
Since the quiver of $E$ is clearly acyclic (e.g. Section 4), $E$ has finite
global dimension $m$. Thus we have
\[\thick T=\CC*\cdots*\CC\ \ \ (m+1\ \mbox{times})\]
(see e.g. \cite[Proposition 2.6]{KK}).
In particular, $\thick T$ is contravariantly finite.
Applying \cite[Proposition 2.3(1)]{IY}, for any $X\in\DDD^{\bo}(\mod \Lambda)$,
there exists a triangle
$Y\to X\to Z\to Y[1]$ with $Y\in\thick T$ and $\Hom_{\DDD^{\bo}(\mod \Lambda)}(U,Z)=0$
for any $U\in\thick T$.
By Proposition \ref{generate}, we have $Z=0$ and $X\simeq Y\in\thick T$.
Thus the assertion follows.

(b) We already observed $\DDD^{\bo}(\mod \Lambda)=\thick T\simeq\KKK^{\bo}(\proj E)$.
This is $\DDD^{\bo}(\mod E)$ since $E$ has finite global dimension.
\end{proof}

\section{Explicit correspondence between GL orders on
$\P^d$ and GL weighted $\P^d$}\label{correspondence}


\subsection{A graded Morita equivalence}
In this section we show, given a ring graded by a commutative group,
how to modify the ring, so that it is graded by a subgroup.

Let $G$ be an abelian group and $A$ be a $G$-graded ring.
We denote by $\Mod^GA$ the category of $G$-graded $A$-modules, and by $\mod^GA$ the category of finitely generated $G$-graded $A$-modules.

For a subgroup $H<G$ with finite index, we fix a complete set of representatives $I\subseteq G$ of $G/H$. Let
\begin{equation}\label{eqn:ringchange}
A^{[H]}:=\bigoplus_{h\in H}(A^{[H]})_h\ \ \ \mbox{where}\ \ \ 
(A^{[H]})_h:=\left( A_{i-j+h} \right)_{i,j\in I}.
\end{equation}
Then $A^{[H]}$ has a structure of an $H$-graded ring whose multiplication $(A^{[H]})_h\times (A^{[H]})_{h'}\to (A^{[H]})_{h+h'}$ for $h,h'\in H$ is given by 
\[(a_{i,j})_{i,j\in I}\cdot (a'_{i,j})_{i,j\in I}
:=\left(\sum_{k\in I}a_{i,k}\cdot a'_{k,j}\right)_{i,j\in I}.\]
It is easy to see that the ring structure of $A^{[H]}$ does not depend on the choice of $I$.
Moreover the choice of $I$ does not change the graded structure of $A^{[H]}$ up to graded-Morita equivalence
in the following sense.

\begin{theorem}\label{graded Morita}
With the notation above, we have an equivalence of categories:
\[\Mod^GA\simeq \Mod^HA^{[H]}\]
which induces an equivalence $\mod^GA\simeq \mod^HA^{[H]}$.
 \end{theorem}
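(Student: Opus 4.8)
The plan is to construct an explicit pair of quasi-inverse functors $F\colon \Mod^G A\to \Mod^H A^{[H]}$ and $E\colon \Mod^H A^{[H]}\to \Mod^G A$, to check that the two composites are naturally isomorphic to the respective identities, and then to deduce the equivalence on finitely generated objects by tracking the images of the regular modules. For $M\in\Mod^G A$ I would set $F(M):=\bigoplus_{h\in H}F(M)_h$ with $F(M)_h:=\bigoplus_{i\in I}M_{i+h}$, viewed as column vectors $(m_i)_{i\in I}$ with $m_i\in M_{i+h}$, and let $A^{[H]}$ act by ``matrix times column'': for $a=(a_{i-j+h'})_{i,j}\in (A^{[H]})_{h'}$ put $(a\cdot m)_i:=\sum_{j\in I}a_{i-j+h'}\,m_j$. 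A degree count gives $a_{i-j+h'}\,m_j\in M_{(i-j+h')+(j+h)}=M_{i+(h+h')}$, so $a\cdot m\in F(M)_{h+h'}$ and $F(M)$ is a genuine $H$-graded $A^{[H]}$-module; morphisms are transported entrywise, so $F$ is functorial.

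For the reverse functor I would use the orthogonal idempotents $e_i\in (A^{[H]})_0$ given by the diagonal matrix units (value $1\in A_0$ in position $(i,i)$), which satisfy $\sum_{i\in I}e_i=1$. Given $N\in\Mod^H A^{[H]}$, decompose $N=\bigoplus_{i\in I}e_iN$ and define $E(N)_g:=(e_iN)_h$, where $g=i+h$ is the unique expression with $i\in I$ and $h\in H$. The $A$-action is the delicate point: for homogeneous $a\in A_{g'}$ and $x\in E(N)_g=(e_iN)_h$, writing $g+g'=i'+h'$, I would set $a\cdot x:=\alpha\cdot x$, where $\alpha\in A^{[H]}$ is the matrix with sole nonzero entry $a$ in position $(i',i)$; one checks that $\alpha$ is homogeneous of degree $h'-h\in H$ and that $\alpha x\in (e_{i'}N)_{h'}=E(N)_{g+g'}$. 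Here the main thing to verify is that this is a well-defined, associative, unital $A$-action: associativity reduces to the identity $\alpha'\alpha=\beta$ for the corresponding matrix units (a one-line matrix multiplication), and unitality to $e_ix=x$ for $x\in e_iN$.

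With both functors in hand, the bulk of the work is routine index bookkeeping showing $EF\cong\mathrm{id}$ and $FE\cong\mathrm{id}$: one finds $EF(M)_g=M_g$ and $FE(N)_h=N_h$ directly from the definitions, and checks that the actions agree. I would also record, as the statement demands, that these isomorphisms do not depend on the chosen set $I$ of representatives, which is inherited from the corresponding independence of the ring structure of $A^{[H]}$. I expect this bookkeeping, rather than any conceptual difficulty, to be the principal obstacle.

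Finally, for the restriction to finitely generated modules I would show that $F$ and $E$ carry progenerators to progenerators. Both functors are additive and preserve direct sums and epimorphisms, as is immediate from their componentwise definitions, so it suffices to compute them on shifts of the regular modules. A direct degreewise comparison gives $F(A(-g))\cong (A^{[H]}e_{i_0})(-h_0)$, where $g=i_0+h_0$ with $i_0\in I$ and $h_0\in H$, and $E(A^{[H]}e_{i_0})\cong A(-i_0)$; in particular $E(A^{[H]})\cong\bigoplus_{i\in I}A(-i)$ is finitely generated since $I$ is finite. Thus a finite presentation $\bigoplus_k A(g_k)\twoheadrightarrow M$ is sent by $F$ to a finite presentation of $F(M)$ over $A^{[H]}$, and symmetrically for $E$; hence $F$ and $E$ restrict to mutually inverse equivalences $\mod^G A\simeq \mod^H A^{[H]}$.
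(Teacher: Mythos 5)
Your proposal is correct and follows essentially the same route as the paper: the functor $F$ with its matrix-times-column action is identical, your quasi-inverse $E$ built from the diagonal idempotents $e_i$ is exactly the module constructed in the paper's density argument (the paper instead verifies fully faithfulness directly, using that $FM=M$ as abelian groups, plus density), and the restriction to finitely generated modules rests on the same computation $F(A(\text{shift}))\simeq$ a shifted $A^{[H]}e_i$, with finiteness of $I$. The difference is purely one of packaging, not of mathematical content.
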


Although similar results already exist (e.g. \cite{H,Mo}),
we include a complete proof due to lack of suitable references for our setting.

 \begin{proof}
For $M=\bigoplus_{g\in G}M_g$ in $\Mod^GA$, define $FM\in\Mod^HA^{[H]}$ by
\[FM:=\bigoplus_{h\in H}(FM)_h\ \ \ \mbox{where}\ \ \ (FM)_h:=(M_{i+h})_{i\in I}.\]
Thus $FM=M$ as abelian groups, and the action $(A^{[H]})_h\times (FM)_{h'}\stackrel{\rm act.}\longrightarrow
(FM)_{h+h'}$ is given by
\[(a_{i,j})_{i,j\in I}\cdot (m_{i})_{i\in I}
:=\left(\sum_{k\in I}a_{i,k}\cdot m_{k}\right)_{i\in I}.\]

Let $f:M\to N$ be a morphism of abelian groups. Then $f$ is a morphism in $\Mod^GA$ if and only if $f$ induces the following commutative diagram for any $i,j\in G$:
\[\xymatrix{
A_i\times M_j\ar[r]^(.6){{\rm act.}}\ar[d]^{1\times f}&M_{i+j}\ar[d]^f\\
A_i\times N_j\ar[r]^(.6){{\rm act.}}&N_{i+j}.}\]
This is equivalent to that $f$ induces the following commutative diagram for any $h,h'\in H$:
\[\xymatrix{
(A_{i-j+h})_{i,j\in I}\times(M_{i+h'})_{i\in I}\ar[r]^(.65){{\rm act.}}\ar[d]^{1\times f}&(M_{i+h+h'})_{i\in I}\ar[d]^f\\
(A_{i-j+h})_{i,j\in I}\times(N_{i+h'})_{i\in I}\ar[r]^(.65){{\rm act.}}&(N_{i+h+h'})_{i\in I}.}\]
This means that $f$ is a morphism in $\Mod^HA^{[H]}$.
We conclude that $F:\Mod^GA\to\Mod^HA^{[H]}$ is fully faithful.

Finally we show that $F$ is dense.
For $i\in I$, let $e_i\in (A^{[H]})_0$ be an idempotent whose $(i,i)$-entry is $1$ and other entries are $0$.
For any $N=\bigoplus_{h\in H}N_h$ in $\Mod^HA^{[H]}$, let
\[M_{i+h}:=e_iN_h\ \ \ \mbox{for $i\in I$, $h\in H$ and}\ \ \ M:=\bigoplus_{g\in G}M_g.\]
For $g,g'\in G$, we define the action $A_g\times M_{g'}\to M_{g+g'}$ by
\[A_g\times M_{g'}=e_i(A^{[H]})_he_j\times e_jN_{h'}\xrightarrow{{\rm act.}} e_iN_{h+h'}=M_{g+g'},\]
where $i,j\in I$ and $h,h'\in H$ are unique elements satisfying $g'=j+h'$ and $g=i-j+h$. It is routine to check that $M$ is a $G$-graded $A$-module satisfying $FM\simeq N$.

It remains to show that $F$ induces an equivalence $\mod^GA\to\mod^HA^{[H]}$.
This is immediate since $F(A(i+h))=(A^{[H]}(h))e_i$ holds for any $i\in I$ and $h\in H$.
\end{proof}

\begin{remark}
Theorem \ref{graded Morita} can also be shown by the following argument:
Let $P:=\bigoplus_{i\in I}A(-i)\in\Mod^GA$.
Then the following statements can be checked easily:
\begin{itemize}
\item $\End_A^G(P)=A^{[H]}$.
\item $\add\{P(h)\mid h\in H\}=\proj^GA$.
\end{itemize}
By a standard argument in Morita theory, one can show that the functor
\[F:=\bigoplus_{h\in H}\Hom_A^G(P,-(h)):\Mod^GA\to\Mod^HA^{[H]}\]
is an equivalence.
\end{remark}

 \begin{example}
	 Let $A$ be a $\Z$-graded ring. If we choose $\left\{ 0,1 \right\}$ as the
	 representatives of the two cosets of $\Z/2\Z$ then
\[ A^{[2\Z]}=\bigoplus_{i\in 2\Z}(A^{[2\Z]})_i\ \ \ \mbox{where}\ \ \ (A^{[2\Z]})_{i}=
\begin{bmatrix}A_{i}&{A_{i-1}}\\A_{i+1}&A_{i}\end{bmatrix}.\]
 A $\Z$-graded $A$-module $M=\bigoplus_{i\in\Z}M_i$ corresponds
	 to the $2\Z$-graded $A^{[2\Z]}$-module $
	 \bigoplus_{i\in 2\Z}\begin{bmatrix}
		  M_{i}\\M_{i+1}
	 \end{bmatrix}
	 $.
 \end{example}
\subsection{Geigle-Lenzing projective spaces.}
We now introduce Geigle-Lenzing (GL)
projective spaces, or more precisely, the category of coherent sheaves
on them. The technique of studying a category resembling a category of sheaves
without ever explicitly mentioning a topological space is especially prominent in
noncommutative algebraic geometry.
We use the same notation as in \cite{HIMO} where much more information
can be found. Our goal
is to show that the category of coherent sheaves on GL projective spaces
is equivalent to the module category of a GL order on $\P^d$.

To define GL weighted $\P^d$, as before we choose $n$ hyperplanes ${\bf L}=(L_1,\dots,
L_n)$ in $\P^d_{T_0:\dots : T_d}$
satisfying Assumption \ref{general position}.  We may assume the hyperplanes are given as
zeros of the linear polynomials
\[\ell_i({\bf T})=\sum_{j=0}^{d}\lambda_{i,j}T_j.\]
	Also fix an $n$-tuple of positive integers (the \emph{weights}) ${\bf p}=(p_1,\dots,p_n)$
and let 
\[k[{\bf T},{\bf X}]=k[T_0,\dots,T_d,X_1,\dots,X_n]\] 
\begin{equation}\label{relations}
h_i:=X_i^{p_i}-\ell_i({\bf T)}
\end{equation}
Now consider the $k$-algebra 
\[R=R({\bf L},{\bf p}):=k[{\bf T},{\bf X}]/(h_i\;|\;1\leq i\leq n).\] 
As in the previous section, let \[\L=\L({\bf p}) = \gen{\x_1,\cdots,\x_n,\c}/({p_i\x_i-\c}\;|\;
1\leq i\leq n)\]
We give $R$ an $\L$-grading by defining ${\rm deg}\;X_i=\x_i$ and
${\rm deg}\;T_i=\c$.
	
We will soon encounter several different graded rings so it is useful to establish the following notation:

\begin{definition}
	Let $G$ be an abelian group and $A=\bigoplus_{g\in G}A_g$ be a right noetherian $G$-graded ring which is finitely generated over $k$ and $\dim_kA_g<\infty$ for all $g\in G$. 
	We denote by $\mod^GA$ the category of finitely generated $G$-graded $A$-modules, and by $\mod^G_0A$ the full subcategory of $\mod^GA$ of finite dimensional modules.
We let
\[\qgr A:=\mod^G A/\mod^G_0A.\]
\end{definition}

We apply this definition to the setting of GL projective spaces:

\begin{definition}
For the $\L$-graded $k$-algebra $R=R({\bf L}, {\bf p})$, we call
	\[\coh\X=\coh\X({\bf L}, {\bf p}) :=\qgr R({\bf L}, {\bf p})\]
	the category of \emph{coherent sheaves on GL projective space ${\X}$ of type} $({\bf L},{\bf p})$.
\end{definition}

In the rest of this section, we will prove the following connection between GL projective spaces and GL orders.

\begin{theorem}\label{graded vs order}
Let $\X=\X({\bf L}, {\bf p})$ be a GL projective space and $\Lambda=\Lambda({\bf L}, {\bf p})$ be a GL order of type $({\bf L}, {\bf p})$. Then we have an equivalence
\[\coh\X\simeq \mod \Lambda.\]
\end{theorem}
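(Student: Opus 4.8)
The plan is to identify a $\Z$-graded (or $\L$-graded) ring whose graded module category is $\mod\Lambda$, and then match it against $R=R({\bf L},{\bf p})$ via the graded-Morita machinery of Theorem~\ref{graded Morita}. The natural candidate is the total homogeneous coordinate ring of $\P^d$, namely $S:=k[T_0,\dots,T_d]$, together with the data of the order $\Lambda$ realised over $\Spec S$ / $\Proj S$. Concretely, I would first produce a $\Z$-graded (where $\Z=\gen{\c}$) ring $B$ such that $\mod\Lambda\simeq\qgr^{\Z} B$, by taking $\Lambda$ to be the sheafification of the $\Proj$-construction applied to $B$. Since $\Lambda$ is built as an iterated tensor product of the triangular sheaves ${\rm T}_{p_i}(\O,\O(-L_i))$, the ring $B$ should be the corresponding iterated tensor product of triangular matrix rings over $S$, with the ideals $(\ell_i({\bf T}))\subseteq S$ playing the role of $\O(-L_i)$. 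The first step, then, is to write down $B$ explicitly and verify $\mod\Lambda\simeq\qgr^{\Z}B$; this is a standard Serre-type correspondence between coherent modules over a sheaf of orders on $\Proj S$ and the quotient $\qgr^{\Z}B$ of graded $B$-modules by finite-dimensional ones.

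The heart of the argument is to recognise $B$ as $R^{[\Z\c]}$, i.e.\ as the ring obtained from $R=R({\bf L},{\bf p})$ by the change-of-grading construction \eqref{eqn:ringchange} applied to the finite-index subgroup $H:=\Z\c<\L$. The key observation is that $\L/\Z\c$ is a finite abelian group of order $p_1\cdots p_n$, so Theorem~\ref{graded Morita} applies and yields $\Mod^{\L}R\simeq\Mod^{\Z\c}R^{[\Z\c]}$, restricting to $\mod^{\L}R\simeq\mod^{\Z\c}R^{[\Z\c]}$. I would choose as coset representatives $I$ the monomials $\x=\sum_i\ell_i\x_i$ with $0\le\ell_i<p_i$, which index the rows/columns of the matrices defining $\Lambda$; under this choice the graded pieces $(R^{[\Z\c]})_h=(R_{i-j+h})_{i,j\in I}$ should match, entry by entry, the graded pieces of the homogeneous coordinate ring $B$ of $\Lambda$. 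This identification rests on the defining relation $h_i=X_i^{p_i}-\ell_i({\bf T})$: multiplication by $X_i$ shifts degree by $\x_i$ and, upon reaching the $p_i$-th power, becomes multiplication by $\ell_i({\bf T})$, which is exactly the pattern encoded by the off-diagonal $\O(-L_i)$ entries and the wraparound $\O(L_i)$ entry in the bimodules $J_i$. So I would carry out the computation $R^{[\Z\c]}\cong B$ by comparing the two rings degree by degree.

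Finally, I would pass to the quotient categories. Taking $H=\Z\c$ in Theorem~\ref{graded Morita} gives $\mod^{\L}R\simeq\mod^{\Z}R^{[\Z\c]}\cong\mod^{\Z}B$, and this equivalence sends finite-dimensional modules to finite-dimensional modules (it is the identity on underlying vector spaces), hence descends to the Serre quotients:
\[\coh\X=\qgr R=\mod^{\L}R/\mod^{\L}_0R\simeq\mod^{\Z}B/\mod^{\Z}_0B=\qgr^{\Z}B\simeq\mod\Lambda,\]
where the last equivalence is the Serre correspondence established in the first step. Chaining these gives $\coh\X\simeq\mod\Lambda$, as required.

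The main obstacle I anticipate is the precise bookkeeping in the middle step: matching $R^{[\Z\c]}$ with the coordinate ring of $\Lambda$ requires choosing the coset representatives $I$ compatibly with the row/column indexing of the tensor product $\Lambda_1\otimes_\O\cdots\otimes_\O\Lambda_n$, and then checking that the triangular pattern of ideals $(\ell_i({\bf T}))$ coming from the relations $h_i$ reproduces exactly the entries $\O(-L_i)$ above the diagonal and $\O(L_i)$ in the corner. The grading group $\L$ is noncyclic, so the tensor (Segre-type) structure of $R^{[\Z\c]}$ over the product index set $\prod_i\{0,1,\dots,p_i-1\}$ must be reconciled with the iterated tensor product defining $\Lambda$; verifying that the two multiplications agree — in particular that the degree-$\c$ wraparound in each factor matches multiplication by $X_i^{p_i}=\ell_i({\bf T})$ — is where the real content lies, even though each individual check is routine.
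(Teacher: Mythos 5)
Your proposal follows essentially the same route as the paper: it likewise applies Theorem~\ref{graded Morita} with $H=\Z\c$ and the same coset representatives $I=\{\sum_i a_i\x_i\mid 0\le a_i<p_i\}$ to get $\qgr R\simeq\qgr R^{[\Z\c]}$, computes $R^{[\Z\c]}$ as the tensor product of triangular matrix rings ${\rm T}_{p_i}(S,X_i^{p_i})$ over $S=k[{\bf T}]$ using the relations $X_i^{p_i}=\ell_i({\bf T})$ (Proposition~\ref{prop:standardfrorm}), and identifies this ring with a coordinate ring of $\Lambda$ (Proposition~\ref{B and R^Zc}) before descending to Serre quotients. The only cosmetic difference is the direction of the Serre-type step: where you propose to sheafify the explicit matrix ring $B$ over $\Proj S$ to recover $\Lambda$, the paper goes the other way, defining $B(\Lambda,L)=\bigoplus_{\ell\ge0}H^0\bigl(\P^d,L^{\otimes_\Lambda\ell}\bigr)$ with $L=\Lambda\otimes_\O\O(1)$ and invoking the Artin--Zhang theorem (Theorem~\ref{Artin-Zhang}) with ampleness of $s=-\otimes_\Lambda L$ deduced from ampleness of $\O(1)$.
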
 

For the subgroup $\Z\c$ of $\L$, we have
\[\Z\c\simeq\Z\ \mbox{ and }\ \L/\Z\c\simeq\prod_{i=1}^n\Z/ p_i \Z.\]
A key role in the proof is played by a $\Z$-graded subring
\[S:=\bigoplus_{\ell\in\Z}R_{\ell\c}\]
which is isomorphic to the polynomial ring $k[{\bf T}]$.
By (\ref{relations}), we have $X_i^{p_i}=\ell_i({\bf T})\in S$ for all $1\leq i\leq n$.

Now let us consider the $\Z$-graded ring $R^{[\Z\c]}$.

\begin{proposition}\label{prop:standardfrorm}
We have an isomorphism of $\Z$-graded $k$-algebras
\[R^{[\Z\c]}\simeq {\rm T}_{p_1}(S,X_1^{p_1})\otimes_S\cdots\otimes_S {\rm T}_{p_n}(S,X_n^{p_n}),\]
where we regard ${\rm T}_{p_i}(S,X_i^{p_i}):={\rm T}_{p_i}(S,(X_i^{p_i}))$ as a $\Z$-graded $k$-algebra whose degree $\ell$-part
consists of elements such that lower diagonal entries are in $S_\ell$ and upper diagonal entries are in $X_i^{p_i}S_{\ell-1}$.
\end{proposition}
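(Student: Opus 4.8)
The plan is to realise both sides as explicit subrings of a single matrix ring over a localisation of $R$, and to exhibit the isomorphism as conjugation by a diagonal matrix of monomials. First I would record the $\L$-graded structure of $R$. Since $X_i^{p_i}=\ell_i(\mathbf T)\in S$, the ring $R$ is a free $S$-module with basis the reduced monomials $X^{\mathbf a}:=X_1^{a_1}\cdots X_n^{a_n}$ with $0\le a_i<p_i$, where $S=k[\mathbf T]$ is $\Z$-graded by total degree in $\mathbf T$. Every element of $\L$ has a unique normal form $\sum_i a_i\x_i+\ell\c$ with $0\le a_i<p_i$ and $\ell\in\Z$ (using $\Z\c\cong\Z$ and $\L/\Z\c\cong\prod_i\Z/p_i\Z$), and a degree count gives
\[ R_{\sum_i a_i\x_i+\ell\c}=X^{\mathbf a}\,S_\ell, \]
which vanishes for $\ell<0$. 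Choosing $I:=\{\sum_i a_i\x_i\mid 0\le a_i<p_i\}$ as representatives of $\L/\Z\c$, the definition \eqref{eqn:ringchange} presents $R^{[\Z\c]}$ as a subring of $M_{p_1\cdots p_n}(R)$ indexed by such tuples $\mathbf a$, with multiplication the ordinary matrix multiplication in $R$. Reducing the index $\sum_i(a_i-b_i)\x_i+\ell\c$ to normal form, the $(\mathbf a,\mathbf b)$-entry is the free $S$-module $X^{\mathbf c}S$, with $c_i=(a_i-b_i)\bmod p_i$, whose degree-$\ell$ part is $X^{\mathbf c}S_{\ell-N}$ for $N=|\{i\mid a_i<b_i\}|$.

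Next I would compute $B:={\rm T}_{p_1}(S,X_1^{p_1})\otimes_S\cdots\otimes_S{\rm T}_{p_n}(S,X_n^{p_n})$ in the same terms and compare. It is a subring of $M_{p_1\cdots p_n}(S)$ indexed by tuples $\mathbf s$ with $1\le s_i\le p_i$, which I match to the tuples $\mathbf a$ via $s_i=a_i+1$; since the tensor is over $S$ and $\ell_i=X_i^{p_i}$, the $(\mathbf a,\mathbf b)$-entry of $B$ is the ideal $\big(\prod_{i:a_i<b_i}X_i^{p_i}\big)S$, with the same $N$ governing its degree-$\ell$ part. The key step is to introduce the invertible diagonal matrix $D:=\mathrm{diag}(X^{\mathbf a})_{\mathbf a}$ inside $M_{p_1\cdots p_n}(R[X_1^{-1},\dots,X_n^{-1}])$; here each $X_i$ is a non-zero-divisor of $R$ (multiplication by $X_i$ is invertible over $\mathrm{Frac}(S)$ as $\ell_i\ne0$), so $R$ embeds in this localisation. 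Conjugation $\Phi(M):=DMD^{-1}$ is a ring automorphism of the matrix ring over the localisation and multiplies the $(\mathbf a,\mathbf b)$-entry by $X^{\mathbf a-\mathbf b}$. I would then check entrywise that $\Phi$ carries $B$ onto $R^{[\Z\c]}$: for $i$ with $a_i\ge b_i$ the factor $X_i^{a_i-b_i}$ is already reduced, while for $i$ with $a_i<b_i$ one has $X_i^{a_i-b_i}\cdot X_i^{p_i}=X_i^{a_i-b_i+p_i}$ with $0<a_i-b_i+p_i<p_i$, so $\Phi$ turns $\big(\prod_{i:a_i<b_i}X_i^{p_i}\big)S$ into exactly $X^{\mathbf c}S$, degree by degree. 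As conjugation preserves products and the indicated homogeneous pieces, $\Phi$ restricts to an isomorphism $B\xrightarrow{\ \sim\ }R^{[\Z\c]}$ of $\Z$-graded $k$-algebras.

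The main obstacle I expect is precisely this last verification: although conjugation by $D$ is tautologically a ring homomorphism in the ambient matrix ring, one must confirm that it maps the concrete subring $B$ \emph{onto} the concrete subring $R^{[\Z\c]}$, not merely into a larger subring. This is the bookkeeping of exponents modulo $p_i$ combined with the substitution $X_i^{p_i}=\ell_i(\mathbf T)$, which is exactly what converts the ideal entries $\prod\ell_i\cdot S$ of $B$ into the monomial entries $X^{\mathbf c}S$ of $R^{[\Z\c]}$; both containment and surjectivity must be checked on every entry. The freeness of $R$ over $S$ makes each entrywise map a bijection, and compatibility with matrix multiplication is guaranteed because the two cases $a_i\ge b_i$ and $a_i<b_i$ are glued by the single formula $X^{\mathbf a-\mathbf b}$. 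Alternatively one may reduce to the case $n=1$ using that $(-)^{[\Z\c]}$ is compatible with $\otimes_S$ and with the factorisation $R=\bigotimes_i S[X_i]/(X_i^{p_i}-\ell_i)$, or identify $R^{[\Z\c]}$ with $\End_R^{\L}\big(\bigoplus_{\mathbf a}R(-\textstyle\sum_i a_i\x_i)\big)$ via the Remark following Theorem \ref{graded Morita}.
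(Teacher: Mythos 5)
Your proof is correct, and at its core it performs the same computation as the paper: both arguments choose the representatives $I=\{\sum_i a_i\x_i\mid 0\le a_i<p_i\}$ of $\L/\Z\c$ and both rest on the identification $R_{\x-\y+\Z\c}=X^{\mathbf{a}-\mathbf{b}+\sum_i\epsilon_ip_i\mathbf{e}_i}S$, i.e.\ your $X^{\mathbf{c}}S$ with $c_i=(a_i-b_i)\bmod p_i$. Where you diverge is in how this entrywise data is assembled into an isomorphism. The paper factors $R^{[\Z\c]}$ as a tensor product over $k$ of $p_i\times p_i$ blocks with entries in $S_i:=k[X_i^{p_i}]$, identifies each block with ${\rm T}_{p_i}(S_i,X_i^{p_i})$ (leaving the identifying map implicit), and finally converts the tensor product over $k$ of the $S_i$ into a tensor product over $S$. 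You instead stay inside the single matrix ring $M_{p_1\cdots p_n}$ over $S$ (respectively over a localization of $R$) and realize the isomorphism explicitly as conjugation by $\mathrm{diag}(X^{\mathbf{a}})_{\mathbf{a}\in I}$, checking entry by entry and degree by degree that ${\rm T}_{p_1}(S,X_1^{p_1})\otimes_S\cdots\otimes_S{\rm T}_{p_n}(S,X_n^{p_n})$ is carried onto $R^{[\Z\c]}$. What the paper's route buys is brevity and a formal parallel with the definition $\Lambda=\Lambda_1\otimes_{\O}\cdots\otimes_{\O}\Lambda_n$. What your route buys is uniformity and rigour: the paper's intermediate equalities implicitly use $S=S_1\otimes_k\cdots\otimes_k S_n$, i.e.\ that $\ell_1(\mathbf{T}),\dots,\ell_n(\mathbf{T})$ are algebraically independent generators of $k[\mathbf{T}]$, which is literally true only when $n=d+1$ (for $n\le d$ the forms $\ell_i$ generate a proper subring of $S$, and for $n>d+1$ they satisfy linear relations, so the $\otimes_k$ decompositions must be reinterpreted as images of multiplication maps); your conjugation argument, together with the observation that each $X_i$ is a non-zero-divisor on $R$ so that the ambient localized matrix ring exists, works verbatim for every $n$ and in addition supplies the explicit map that the paper's chain of isomorphisms leaves unnamed.
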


\begin{proof}
Let $I:=\{\sum_{i=1}^na_i\x_i\mid 0\le a_i\le p_i-1\}$ be a complete set of representatives of $\L/\Z\c$.
Let $S_i:=k[X_i^{p_i}]$ for $1\le i\le n$.
For $\x=\sum_{i=1}^na_i\x_i$ and $\y=\sum_{i=1}^nb_i\x_i$ in $I$, we have
\begin{eqnarray*}
R_{\x-\y+\Z\c}=(\prod_{i=1}^nX_i^{a_i-b_i+\epsilon_ip_i})S=(X_1^{a_1-b_1+\epsilon_1p_1}S)\otimes_S\cdots\otimes_S(X_n^{a_n-b_n+\epsilon_np_n}S),
\end{eqnarray*}
where $\epsilon_i:=0$ if $a_i\ge b_i$ and $1$ otherwise. Thus we have isomorphisms
\begin{eqnarray*}
R^{[\Z\c]}&=&\bigotimes_{i=1}^n\left[
\begin{array}{ccccc}
S&X_i^{p_i-1}S&\cdots&X_i^2S&X_iS\\
X_iS&S&\cdots&X_i^3S&X_i^2S\\
\vdots&\vdots&\ddots&\vdots&\vdots\\
X_i^{p_i-2}S&X_i^{p_i-3}S&\cdots&S&X_i^{p_i-1}S\\
X_i^{p_i-1}S&X_i^{p_i-2}S&\cdots&X_iS&S
\end{array}\right]\\
&=&{\rm T}_{p_1}(S,X_1^{p_1})\otimes_S\cdots\otimes_S {\rm T}_{p_n}(S,X_n^{p_n})
\end{eqnarray*}
of $\Z$-graded $k$-algebras.
\end{proof}


	Let $\Lambda=\Lambda({\bf L},{\bf p})$ and $L=\Lambda\otimes_{\O}\O(1)$ which is a $\Lambda$-bimodule. 
	We define a $\Z$-graded ring
\[B(\Lambda,L):=\bigoplus_{\ell=0}^\infty H^0\left(\P^d, L^{\otimes_\Lambda\ell}\right).\]


\begin{proposition}\label{B and R^Zc}
We have an isomorphism of $\Z$-graded $k$-algebras:
\[B(\Lambda,L)\simeq R^{[\Z\c]}.\]
\end{proposition}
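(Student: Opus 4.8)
The plan is to compute $B(\Lambda,L)$ directly and to match it, entry by entry, with the tensor product appearing in Proposition \ref{prop:standardfrorm}. The first step is to simplify the powers of $L$. Since $L=\Lambda\otimes_\O\O(1)$ and $\O(1)$ is a central invertible $\O$-bimodule (here $\O$ is commutative), the twist $\O(\ell)$ slides freely past the $\Lambda$-tensor, so $L^{\otimes_\Lambda\ell}\simeq\Lambda\otimes_\O\O(\ell)$ for all $\ell\ge0$. Consequently
\[B(\Lambda,L)=\bigoplus_{\ell\ge0}H^0\!\left(\P^d,\Lambda\otimes_\O\O(\ell)\right),\]
where the multiplication is induced by the matrix multiplication of $\Lambda$ together with the multiplication of twists $\O(\ell)\otimes_\O\O(m)\to\O(\ell+m)$, i.e. by ordinary multiplication inside the homogeneous coordinate ring $S=\bigoplus_\ell H^0(\P^d,\O(\ell))\simeq k[{\bf T}]$.

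Next I would treat a single factor. For fixed $i$ the sheaf $\Lambda_i\otimes_\O\O(\ell)$ is the $p_i\times p_i$ matrix sheaf whose diagonal and lower entries are $\O(\ell)$ and whose upper entries are $\O(\ell-L_i)$. Using $H^0(\P^d,\O(\ell))=S_\ell$, together with the fact that the inclusion $\O(-L_i)\hookrightarrow\O$ is multiplication by the linear form $\ell_i$, identifies $H^0(\P^d,\O(\ell-L_i))$ with $\ell_i S_{\ell-1}\subseteq S_\ell$. Summing over $\ell$ then yields an isomorphism of $\Z$-graded $k$-algebras
\[\bigoplus_{\ell\ge0}H^0\!\left(\P^d,\Lambda_i\otimes_\O\O(\ell)\right)\ \simeq\ {\rm T}_{p_i}(S,X_i^{p_i}),\]
since $X_i^{p_i}=\ell_i$ in $S$ and the grading convention of Proposition \ref{prop:standardfrorm} (lower entries in $S_\ell$, upper entries in $X_i^{p_i}S_{\ell-1}$) matches exactly.

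For the general case I would globalise this along the decomposition $\Lambda=\Lambda_1\otimes_\O\cdots\otimes_\O\Lambda_n$. The $(\mathbf j,\mathbf k)$-entry of $\Lambda\otimes_\O\O(\ell)$ is the line bundle $\O(\ell-\sum_{i\in U}L_i)$, where $U=\{i\mid j_i<k_i\}$ records the upper-triangular factors; its global sections are the degree-$\ell$ polynomials divisible by $\prod_{i\in U}\ell_i$, that is $(\prod_{i\in U}\ell_i)S_{\ell-|U|}$. This is precisely the corresponding entry of the $S$-tensor product ${\rm T}_{p_1}(S,X_1^{p_1})\otimes_S\cdots\otimes_S{\rm T}_{p_n}(S,X_n^{p_n})$: each factor contributes either $S$ or the principal ideal $(\ell_i)$, and since $S$ is a domain these principal ideals are free of rank one, so their $S$-tensor maps isomorphically onto $(\prod_{i\in U}\ell_i)S$ with the correct degree shift. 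Matching entries and checking that the matrix-plus-polynomial multiplication agrees on both sides gives $B(\Lambda,L)\simeq\bigotimes_{i,S}{\rm T}_{p_i}(S,X_i^{p_i})$, and Proposition \ref{prop:standardfrorm} identifies the right-hand side with $R^{[\Z\c]}$.

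The step I expect to be the main obstacle is the last one: verifying that passage to global sections is compatible with the tensor product over $\O$, so that it produces the tensor product over $S$ of the one-factor section rings. Concretely this is a K\"unneth-type identification for the line-bundle entries, together with a check that the induced ring multiplication coincides with that of the abstract tensor product of triangular matrix algebras. The domain property of $S$ and the explicit realisation of the sections as polynomials divisible by the relevant products of the $\ell_i$ are exactly what make this identification go through.
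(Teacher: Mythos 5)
Your proposal is correct and follows essentially the same route as the paper: the paper applies the section functor $\bigoplus_{\ell\ge0}H^0(\P^d,-(\ell))$ entrywise to $\Lambda$, uses that this functor sends the inclusion $\O(-L_i)\hookrightarrow\O$ to $\ell_i({\bf T})S\hookrightarrow S$ (your identification of the upper entries with $\ell_iS_{\ell-1}$), and then concludes via Proposition \ref{prop:standardfrorm} exactly as you do. Your entry-by-entry K\"unneth-type verification for the tensor factors is simply a more explicit spelling-out of the same computation that the paper phrases functorially.
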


\begin{proof}
	Clearly we have
\[\bigoplus_{\ell=0}^\infty H^0(\P^d, \O(\ell))\simeq 
	S=k[{\bf T}].\] and so we get a category equivalence
	\[\Phi:=\bigoplus_{\ell=0}^\infty H^0(\P^d,-(\ell))
\!:\coh \P^d\simeq \mod^{\Z}S/\mod_0^{\Z}S=\qgr S.\] 
Since the divisor $L_i$ is the zero set of the polynomial $\ell_i(\bf T)$,
the functor $\Phi$ sends the natural inclusion $\O(-L_i)\to\O$ to
the natural inclusion $\ell_i({\bf T}) S\to S$.
	Thus we get 
	\begin{align*}
	B(\Lambda,L)&\simeq
	{\rm T}_{p_1}(S,\ell_1({\bf T}))\otimes_S\cdots\otimes_S {\rm T}_{p_n}(S,\ell_n({\bf T}))\\
	&={\rm T}_{p_1}(S,{X_1^{p_1}})\otimes_S\cdots\otimes_S {\rm T}_{p_n}(S,X_{n}^{p_n})\\
	&=R^{[\Z\c]}.\qedhere
	\end{align*}
\end{proof}

To see the role played by $B(\Lambda,L)$ we first need the following more general set up 
introduced by Artin-Zhang in \cite{AZ}.
Let ${\mathcal C}$ be a $k$-linear abelian category, $P\in{\mathcal C}$  a distinguished object. For any $M\in {\mathcal C}$  we define $H^0(M):=\Hom_{{\mathcal C}}(P,M)$. Assume that:
	\begin{enumerate}
		\item[(H1)] $P$ is a noetherian object,
		\item[(H2)] $A_0:=H^0(P)=\End_{{\mathcal C}}(P)$ is a right noetherian ring and $H^0(M)$ is a finitely generated $A_0$-module for all $M\in{\mathcal C}$.
	\end{enumerate}
			Furthermore, let $s$ be a $k$-linear automorphism of ${\mathcal C}$ satisfying the following assumption:
	\begin{enumerate}
		\item[(H3)] $s$ is \emph{ample} in the following sense:
			\begin{enumerate}
				\item for every $M\in{\mathcal C}$, there are positive integers $\ell_1,\dots,\ell_p$ and an epimorphism $\bigoplus_{i=1}^ps^{-\ell_i}P\to M$ in ${\mathcal C}$,
				\item for every epimorphism $f:M\to N$ in ${\mathcal C}$ there exists an integer  $\ell_0$ 
					such that for every $\ell\geq\ell_0 $ the map
					$H^0(s^\ell f):H^0(s^\ell M)\to H^0(s^\ell N)$ is surjective.
			\end{enumerate}
	\end{enumerate}
	Using this setup, we can construct the following $\Z$-graded ring:
\[B := \bigoplus_{\ell=0}^\infty H^0(s^\ell P)\]
and we have the following crucial result, which can be viewed as a generalization of Serre's theorem:
\begin{theorem}\cite[Theorem 4.5]{AZ}\label{Artin-Zhang}
	$B$ is a right noetherian $k$-algebra, and 
	there is an equivalence of categories
\[{\mathcal C}\simeq \qgr B\]
given by $M\mapsto \bigoplus_{\ell=0}^{\infty}H^0(s^\ell M)$.
\end{theorem}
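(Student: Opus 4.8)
The plan is to reprove this reconstruction theorem in the manner of Serre and Artin--Zhang: I would build an explicit functor $\mathcal{C}\to\qGr B$, show it is an equivalence \emph{without} assuming $B$ noetherian, and only afterwards deduce right noetherianity of $B$ and pass to the finitely generated quotient $\qgr B$. First I would make $B$ a ring: since $s$ is a $k$-linear autoequivalence, the rule $(a,b)\mapsto(s^{|b|}a)\circ b$ defines an associative product $B_\ell\times B_m\to B_{\ell+m}$, so that $B=\bigoplus_{\ell\ge0}H^0(s^\ell P)$ is a $\Z$-graded $k$-algebra with $B_0=A_0$. For $M\in\mathcal{C}$ the same composition makes $\Gamma M:=\bigoplus_{\ell\ge0}H^0(s^\ell M)$ a graded right $B$-module, yielding a functor $\Gamma\colon\mathcal{C}\to\Mod^\Z B$; it intertwines $s$ with the degree shift, since $\Gamma(s^{-1}M)$ and $(\Gamma M)(-1)$ agree in all positive degrees and differ only by the finite-dimensional (hence torsion) piece in degree $0$. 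Composing with the quotient functor $\pi\colon\Mod^\Z B\to\qGr B$ by the torsion modules (those whose elements are annihilated by $B_{\ge m}$ for $m\gg0$) gives the candidate equivalence $\Phi:=\pi\circ\Gamma$.

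Next I would record the two formal properties of $\Phi$. It is left exact because $H^0=\Hom_{\mathcal{C}}(P,-)$ is left exact and $\pi$ is exact; it is right exact modulo torsion because, for an epimorphism $f\colon M\to N$, condition (H3)(b) makes $H^0(s^\ell f)$ surjective for all $\ell\gg0$, so $\Cokernel(\Gamma f)$ is right-bounded and killed by $\pi$. Hence $\Phi$ is exact. The key input is a nonvanishing lemma: for every nonzero $X\in\mathcal{C}$ one has $H^0(s^\ell X)\neq0$ for all $\ell\gg0$. One inclusion is immediate from (H3)(a): an epimorphism $\bigoplus_i s^{-\ell_i}P\to X$ restricts on a summand to a nonzero map $P\to s^{\ell_i}X$, so $\Gamma X$ is nonzero in some degree. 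Promoting this to all large degrees is where I would combine ampleness with the noetherianity of $P$ from (H1), and this is the delicate point. Granting the lemma, $\Phi X=0$ forces $\Gamma X$ torsion and hence $X=0$, and a standard argument on images then gives that $\Phi$ is faithful.

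I would then prove $\Phi$ full and essentially surjective using the generators $s^{-\ell}P$. On $\add\{s^{-\ell}P\mid\ell\ge0\}$ the functor $\Gamma$ identifies $\Hom_{\mathcal{C}}(s^{-\ell}P,s^{-m}P)$ with the degree-$(\ell-m)$ homomorphisms between the free modules $\Gamma(s^{-\ell}P)$, which agree with $B(-\ell)$ in all large degrees; thus $\Phi$ is fully faithful on these objects. For essential surjectivity, given a graded right $B$-module $N$, choose a presentation $\bigoplus_j B(-m_j)\to\bigoplus_i B(-\ell_i)\to N\to0$, transport the first map back through $\Phi$ to a morphism between the corresponding sums of shifts of $P$, and let $M\in\mathcal{C}$ be its cokernel; the exactness of $\Phi$ then gives $\Phi M\simeq\pi N$. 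Fullness for arbitrary objects follows by presenting source and target, applying the generator case, and chasing the resulting commutative diagram together with exactness. This establishes an equivalence $\mathcal{C}\simeq\qGr B$ realised by $M\mapsto\bigoplus_{\ell\ge0}H^0(s^\ell M)$.

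Finally I would deduce that $B$ is right noetherian. Through the equivalence $\Phi$, graded right submodules of $B=\Gamma P$ correspond, modulo torsion, to subobjects of $\Phi P=\pi B$, hence to subobjects of $P$ in $\mathcal{C}$; since $P$ is noetherian by (H1), any ascending chain of graded right ideals of $B$ stabilises modulo torsion. To upgrade this to genuine stabilisation I would use that each graded piece $B_\ell$ is finite-dimensional over $k$ (by (H2)), so the torsion defect between a graded ideal and its saturation is confined to finitely many finite-dimensional degrees and cannot grow indefinitely along the chain; this forces the chain itself to stabilise, so $B$ is right noetherian. Once $B$ is noetherian, $\qgr B$ is the subcategory of finitely generated objects of $\qGr B$, and the equivalence restricts to the asserted $\mathcal{C}\simeq\qgr B$, matching noetherian objects of $\mathcal{C}$ with finitely generated graded modules. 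I expect the two genuine obstacles to be the uniform nonvanishing lemma of the second step and this noetherianity argument, and---relatedly---keeping the logic acyclic: nonvanishing must be proved from ampleness and (H1) before it is used for faithfulness, and the equivalence with $\qGr B$ must be set up without assuming $B$ noetherian, so that noetherianity can be harvested from it rather than presupposed.
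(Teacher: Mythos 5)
You should first be aware that the paper contains no proof of this statement: Theorem \ref{Artin-Zhang} is quoted from \cite{AZ} precisely so that the authors do not have to prove it, so your attempt has to be measured against Artin--Zhang's original argument. Your skeleton is indeed theirs: make $B$ a graded algebra via $(a,b)\mapsto (s^{|b|}a)\circ b$, define $\Gamma M=\bigoplus_{\ell\ge0}H^0(s^\ell M)$, pass to the quotient by torsion, prove exactness modulo torsion from (H3)(b), and get fullness and essential surjectivity by presenting everything by shifts of $P$. But the two places you yourself flag as delicate are exactly where the content of the theorem lives, and your treatment of them does not close. The uniform nonvanishing lemma (for $X\neq0$, $H^0(s^\ell X)\neq0$ for all $\ell\gg0$, i.e.\ $\Gamma X$ is not torsion) is simply ``granted''; in \cite{AZ} this is where (H1) and both halves of (H3) interact nontrivially, and faithfulness --- hence the whole equivalence --- rests on it. A proposal that defers it has not crossed the hard part of the proof.

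The second gap is worse because the argument you do give is incorrect. First, (H2) does not say that $\dim_k B_\ell<\infty$: it says $H^0(M)$ is a finitely generated module over $A_0=\End_{\mathcal C}(P)$, with $A_0$ right noetherian. (Finite-dimensionality of the graded pieces holds in this paper's application to $\Lambda$, but it is not among (H1)--(H3), and the theorem is stated in that generality.) Second, even granting finite-dimensional pieces, the inference ``the chain stabilises modulo torsion, and the torsion defect is confined to finitely many degrees, hence the chain stabilises'' is false: a torsion module need not be right-bounded, so a strictly ascending chain of graded right ideals can have constant image in the quotient category. Concretely, in $B=k\langle x,y\rangle/(yx,\,y^2)$ the right ideals $I_n$ spanned by $y,xy,\dots,x^ny$ are strictly ascending, every element is annihilated by $B_{\geq1}$ so all $I_n$ become zero modulo torsion, and the graded pieces of $B$ are $2$-dimensional --- finite-dimensionality of the pieces forces nothing. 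The mechanism in \cite{AZ} is different: given a graded right ideal $I$, ampleness (H3)(b) applied to the epimorphism onto the corresponding subobject shows that $I$ agrees with its saturation \emph{on the nose} in all sufficiently large degrees, while in each of the finitely many remaining degrees $I_\ell$ is a finitely generated $A_0$-module over the right noetherian $A_0$; together these give finite generation of $I$. Without this your last step fails, and since restricting the equivalence from $\qGr B$ to $\qgr B$ presupposes that $B$ is right noetherian, the gap propagates to the statement itself.
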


Now we are ready to prove Theorem \ref{graded vs order}.

Using Theorem \ref{graded Morita} and Propositions \ref{B and R^Zc}, we have equivalences
\begin{eqnarray*}
	\coh\X&=&\qgr R\\
	&\simeq&\qgr R^{[\Z\c]}\\
	&\simeq&\qgr B(\Lambda,L).
\end{eqnarray*}
We specialise Artin-Zhang Theorem \ref{Artin-Zhang} to our case by letting ${\mathcal C}:=\mod \Lambda$, $s:=-\otimes_\Lambda L$ and
choosing $\Lambda\in\mod \Lambda$ as the distinguished object. 
It follows easily from ampleness of $\O(1)$ on $\P^d$ that $s$ is ample.
Therefore we have
\[\mod \Lambda\simeq\qgr B(\Lambda,L),\]
which completes the proof.
\qed

\section{Examples}

	To get a better feel for  the tilting bundle $T$ from
	Section \ref{Tilting} let us compute
	$\End_\Lambda(T)$ in the case $d=1$. In this situation, 
	$[0,\c]=\{0,\c,a_i\x_i \;|\;1\leq a_i\leq p_i-1\}$. If $i\neq j$,
	$0< a_i<p_i$ and $0< a_j<p_j$ then \[\Hom_\Lambda(P(a_i\x_i),P(a_j\x_j))=
	H^0(\P^d, P(a_j\x_j-a_i\x_i))=H^0(\P^d,\O(-1))=0\] whilst
	\[\Hom_\Lambda(P(a_i\x_i),P(\left( a_i+1 \right)\x_i)=H^0(\P^d, \O)=1.\] Hence
	the endomorphism algebra is given by the following quiver with relations:
	\[\xymatrix{
		&P(\x_1)\ar[r]|{x_1}&P(2\x_1)\ar[r]|{x_1}&\dots&\dots\ar[r]|(.3){x_1}&
		P((p_1-1)\x_1)\ar[ddr]|{x_1}\\
		&P(\x_2)\ar[r]|{x_2}&P(2\x_2)\ar[r]|{x_2}&\dots&\dots\ar[r]|(.3){x_2}
		&P((p_2-1)\x_2)\ar[dr]|{x_2}\\
		P\ar[uur]|{x_1}\ar[ur]|{x_2}\ar[dr]|{x_n}&\vdots&\vdots&\vdots&\vdots&\vdots&P(\c)\\
		&P(\x_n)\ar[r]|{x_n}&P(2\x_n)\ar[r]|{x_n}&\dots&\dots\ar[r]|(.3){x_n}&P((p_n-1)\x_n)\ar[ur]|{x_n}
	}	\] To see the relations, note firstly that
	by writing down our order we have implicitly chosen
	an $\eta_i\in H^0(\P^1,\O(L_i))$ for all $1\leq i\leq n$.
	If $n\geq 3$ then
	necessarily for all $i\geq 3$ we have $\eta_i=\ell_i(\eta_1,\eta_2)$
	for some functional $\ell_i$. The relations are thus
	\[x_{i}^{p_i}=\ell_i(x_1^{p_1},x_2^{p_2}),\quad{\rm for}\;
i\geq 3.\]

\begin{example}
	In this example we would like to show the relationship
	between a GL weighted $\P^1_{T_0:T_1}$ and the corresponding GL order on $\P^1$.
	Here, $\O=\O_{\P^1}$ and we choose 
$\lambda_1=(1:0),\lambda_2=(0:1), \lambda_3=(1:1)$ be three points on $\P^1$. 
Consider
\[\Lambda=
	\begin{bmatrix}
		\O&\O(-\lambda_1)\\
		\O&\O
\end{bmatrix}\otimes
\begin{bmatrix}
		\O&\O(-\lambda_2)\\
		\O&\O
\end{bmatrix}\otimes
\begin{bmatrix}
		\O&\O(-\lambda_3)\\
		\O&\O
	\end{bmatrix}
\]
In this case, $\L=\gen{\x_1,\x_2,\x_3,\c}/(2\x_1=2\x_2=2\x_3=\c)$ acts
on ${\mod \Lambda}$ where the action $\x_1$ is given by:
\begin{align*}
-\otimes_\Lambda I_1&=-\otimes_\Lambda
\left(\begin{bmatrix}
	\O&\O\\
	\O(\lambda_1)&\O
\end{bmatrix}\otimes
\begin{bmatrix}
	\O&\O(-\lambda_2)\\
	\O&\O
\end{bmatrix}\otimes
\begin{bmatrix}
	\O&\O(-\lambda_3)\\
	\O&\O
\end{bmatrix}\right)
\end{align*}
and similarly for the actions of $\x_2$ and $\x_3$. In this case
\[P=
	\begin{bmatrix}
		\O\\\O
	\end{bmatrix}\otimes
	\begin{bmatrix}
		\O\\\O
	\end{bmatrix}\otimes
	\begin{bmatrix}
		\O\\\O
	\end{bmatrix}
	\; {\rm and}\;P(\x_1)=
	\begin{bmatrix}
		\O\\\O(\lambda_1)
	\end{bmatrix}\otimes
	\begin{bmatrix}
		\O\\\O
	\end{bmatrix}\otimes
	\begin{bmatrix}
		\O\\\O
	\end{bmatrix}
\] and similarly for $P(\x_2)$ and $P(\x_3)$.
The tilting bundle is \[T=P\oplus P(\x_1)\oplus P(\x_2)\oplus P(\x_3)\oplus
P(1)\] with endomorphism algebra given by
\[\xymatrix{&P(\x_1)\ar|{x_1}[dr]\\
P\ar|{x_1}[ur]\ar|{x_2}[r]\ar|{x_3}[dr]&P(\x_2)\ar|{x_2}[r]&P(1)\\
&P(\x_3)\ar|{x_3}[ur]}\] We can always choose coordinates such that
$\O(-\lambda_i)\hookrightarrow\O$ is given by $T_{i-1}=0$ for $i=1,2$ and
$\O(-\lambda_3)\hookrightarrow\O$ is given by $T_1-T_2=0$
Thus the relation is $x_3^2=x_1^2-x_2^2$.
The corresponding $\L$-graded ring is then 
\begin{align*}
R&=k[T_0, T_1, X_1,X_2,X_3]/(X_1^2
-T_0, X_2^2-T_1, X_3^2-(T_0-T_1))\\
&\simeq k[X_1,X_2,X_3]/(X_3^2-(X_1^2-X_2^2)).
\end{align*}

\end{example}
\begin{example}
	Finally, we would like
	to present the simplest example possible on $\P^2_{T_0:T_1:T_2}$: one 
	with $4$ weights, all equaling $2$. Here 
	$\O=\O_{\P^2}$ and we let
	$L_i$ be the hyperplane given by $T_{i-1}=0$ for $1\leq i\leq 3$ and
let $L_4$ be given by $T_0+T_1+T_2=0$.
Consider \[\Lambda=
	\begin{bmatrix}
		\O&\O(-L_1)\\
		\O&\O
\end{bmatrix}\otimes
\begin{bmatrix}
		\O&\O(-L_2)\\
		\O&\O
\end{bmatrix}\otimes
\begin{bmatrix}
		\O&\O(-L_3)\\
		\O&\O
	\end{bmatrix}\otimes
\begin{bmatrix}
		\O&\O(-L_4)\\
		\O&\O
	\end{bmatrix}
\]
In this case $|[0,2\c]|=17$ and the endomorphism algebra is given by 
\[
\begin{xy} 0;<3.5pt,0pt>:<0pt,3.5pt>::
(-60,0) *+{P} ="0",
(-30,30) *+{P(\x_1)} ="1",
(-30,10) *+{P(\x_2)} ="2",
(-30,-10) *+{P(\x_3)} ="3",
(-30,-30) *+{P(\x_4)} ="4",
(0,40) *+{P(\x_1+\x_2)} ="12",
(0,20) *+{P(\x_1+\x_3)} ="13",
(0,10) *+{P(\x_1+\x_4)} ="14",
(0,-10) *+{P(\x_2+\x_3)} ="23",
(0,-20) *+{P(\x_2+\x_4)} ="24",
(0,-40) *+{P(\x_3+\x_4)} ="34",
(0,0) *+{P(\c)} ="c",
(30,30) *+{P(\x_1+\c)} ="c1",
(30,10) *+{P(\x_2+\c)} ="c2",
(30,-10) *+{P(\x_3+\c)} ="c3",
(30,-30) *+{P(\x_4+\c)} ="c4",
(60,0) *+{P(2\c)} ="2c",
"0", {\ar"1"|{x_1}},
"0", {\ar"2"|{x_2}},
"0", {\ar"3"|{x_3}},
"0", {\ar"4"|{x_4}},
"1", {\ar"c"|(.3){x_1}},
"1", {\ar"12"|(.3){x_2}},
"1", {\ar"13"|(.3){x_3}},
"1", {\ar"14"|(.3){x_4}},
"2", {\ar"12"|(.4){x_1}},
"2", {\ar"c"|(.4){x_2}},
"2", {\ar"23"|(.4){x_3}},
"2", {\ar"24"|(.4){x_4}},
"3", {\ar"13"|(.3){x_1}},
"3", {\ar"23"|(.3){x_2}},
"3", {\ar"c"|(.3){x_3}},
"3", {\ar"34"|(.3){x_4}},
"4", {\ar"14"|(.4){x_1}},
"4", {\ar"24"|(.4){x_2}},
"4", {\ar"34"|(.4){x_3}},
"4", {\ar"c"|(.4){x_4}},
"12", {\ar"c2"|(.6){x_1}},
"12", {\ar"c1"|(.7){x_2}},
"13", {\ar"c3"|(.7){x_1}},
"13", {\ar"c1"|(.7){x_3}},
"14", {\ar"c4"|(.6){x_1}},
"14", {\ar"c1"|(.7){x_4}},
"23", {\ar"c3"|(.7){x_2}},
"23", {\ar"c2"|(.6){x_3}},
"24", {\ar"c4"|(.6){x_2}},
"24", {\ar"c2"|(.6){x_4}},
"34", {\ar"c4"|(.6){x_3}},
"34", {\ar"c3"|(.7){x_4}},
"c", {\ar"c1"|(.7){x_1}},
"c", {\ar"c2"|(.6){x_2}},
"c", {\ar"c3"|(.7){x_3}},
"c", {\ar"c4"|(.6){x_4}},
"c1", {\ar"2c"|{x_1}},
"c2", {\ar"2c"|{x_2}},
"c3", {\ar"2c"|{x_3}},
"c4", {\ar"2c"|{x_4}},
\end{xy}
\]
with relations:
\begin{align*}
	x_ix_j&=x_jx_i\\
	x_4^2&=x^2_1+x_2^2+x_3^2
\end{align*}

\end{example}

\end{document}